
\documentclass[12pt]{amsart}
\usepackage{amssymb,amscd}
\usepackage{verbatim}

\usepackage{amsmath,amssymb,graphicx,mathrsfs}   
\usepackage[colorlinks=true,allcolors = blue]{hyperref} 

\textwidth 6.5truein
\textheight 8.67truein
\oddsidemargin 0truein
\evensidemargin 0truein
\topmargin 0truein

\let\frak\mathfrak

\def\>{\relax\ifmmode\mskip.666667\thinmuskip\relax\else\kern.111111em\fi}
\def\<{\relax\ifmmode\mskip-.333333\thinmuskip\relax\else\kern-.0555556em\fi}
\def\vsk#1>{\vskip#1\baselineskip}
\def\vv#1>{\vadjust{\vsk#1>}\ignorespaces}
\def\vvn#1>{\vadjust{\nobreak\vsk#1>\nobreak}\ignorespaces}

  \let\ssize\scriptstyle
\let\sssize\scriptscriptstyle

\let\Medskip\medskip
\def\medskip{\par\Medskip}
\let\Bigskip\bigskip
\def\bigskip{\par\Bigskip}

\let\Maketitle\maketitle
\def\maketitle{\Maketitle\thispagestyle{empty}\let\maketitle\empty}

\newtheorem{thm}{Theorem}[section]
\newtheorem{cor}[thm]{Corollary}
\newtheorem{lem}[thm]{Lemma}

\newtheorem{defn}[thm]{Definition}

\theoremstyle{definition}                                  
\newtheorem{exmp}{Example}[section]

\numberwithin{equation}{section}

\theoremstyle{definition}
\newtheorem*{rem}{Remark}

\let\mc\mathcal
\let\nc\newcommand

\let\al\alpha

\let\ka\kappa
\let\la\lambda

\let\phi\varphi

\let\om\omega

\let\der\partial

\let\ox\otimes

\let\geq\geqslant

\let\leq\leqslant

\let\on\operatorname
\let\bi\bibitem
\let\bs\boldsymbol

\def\C{{\mathbb C}}
\def\Z{{\mathbb Z}}

\def\F{{\mc F}}

\def\+#1{^{\{#1\}}}

\def\End{\on{End}}

\def\beq{\begin{equation}}
\def\eeq{\end{equation}}
\def\be{\begin{equation*}}
\def\ee{\end{equation*}}

\nc{\bea}{\begin{eqnarray*}}
\nc{\eea}{\end{eqnarray*}}
\nc{\bean}{\begin{eqnarray}}
\nc{\eean}{\end{eqnarray}}
\nc{\Ref}[1]{{\rm(\ref{#1})}}

\let\ga\gamma
\let\Ga\Gamma

\nc{\Il}{{\mc I_{\bs\la}}}
\nc{\bla}{{\bs\la}}
\nc{\Fla}{\F_\bla}
\nc{\tfl}{{T^*\Fla}}
\nc{\GL}{{GL_n(\C)}}
\nc{\GLC}{{GL_n(\C)\times\C^*}}

\let\sd s 

\def\ddk_#1{\kk_{#1}\<\>\frac\der{\der\<\>\kk_{#1}}}

\def\bul{\mathbin{\raise.2ex\hbox{$\sssize\bullet$}}}
\def\intt{\mathchoice
{\mathop{\raise.2ex\rlap{$\,\,\ssize\backslash$}{\intop}}\nolimits}
{\mathop{\raise.3ex\rlap{$\,\sssize\backslash$}{\intop}}\nolimits}
{\mathop{\raise.1ex\rlap{$\sssize\>\backslash$}{\intop}}\nolimits}
{\mathop{\rlap{$\sssize\<\>\backslash$}{\intop}}\nolimits}}

\let\kk q 
\let\cc c

\let\Ko K

\def\GZ/{Gelfand-Zetlin}
\def\KZ/{{\slshape KZ\/}}
\def\qKZ/{{\slshape qKZ\/}}
\def\XXX/{{\slshape XXX\/}}

\nc{\A}{{\mc C}}

\def\FFF{{\mathbb F}}
\def\Sing{{\on{Sing}}}

\newcommand{\OS}{\mathcal {A}}

\def\FF{{\mc F}}

\DeclareMathOperator{\codim}{codim}

\newcommand{\bl}{{\bullet}}

\begin{document}

\hrule width0pt
\vsk->

\title[Solutions  modulo  $p$ of Gauss-Manin differential equations]
{Solutions  modulo  $p$ of Gauss-Manin differential equations for 
multidimensional hypergeometric integrals and associated Bethe ansatz }

\author
[ Alexander Varchenko]
{Alexander Varchenko$\>^\star$}

\maketitle

\begin{center}
{\it $^{\star}\<$Department of Mathematics, University
of North Carolina at Chapel Hill\\ Chapel Hill, NC 27599-3250, USA\/}

\end{center}

{\let\thefootnote\relax
\footnotetext{\vsk-.8>\noindent
$^\star\<${\sl E\>-mail}:\enspace anv@email.unc.edu\>,
supported in part by NSF grants DMS-1362924, DMS-1665239}}

\begin{abstract}
We consider the Gauss-Manin differential equations for hypergeometric integrals associated with a family of weighted arrangements of hyperplanes moving parallelly to themselves. We reduce these equations modulo a prime integer $p$ and construct polynomial solutions of the new differential equations as $p$-analogs of the initial hypergeometric integrals.

In some cases we interpret   the $p$-analogs of the hypergeometric integrals as sums over points of   
hypersurfaces defined over the finite field  $\FFF_p$.  That interpretation is similar to the interpretation
by Yu.I.\,Manin in \cite{Ma} of the number of points on an elliptic curve depending on a parameter
as a solution of a Gauss hypergeometric differential equation.

We discuss the associated Bethe ansatz.

\end{abstract}

{\small \tableofcontents  }

\setcounter{footnote}{0}
\renewcommand{\thefootnote}{\arabic{footnote}}

\section{Introduction}
Consider an arrangement of affine hyperplanes $(H_j)_{j=1}^n$ in $\C^k$. Let 
$f_j(t_1,\dots,t_k)$ be a first degree polynomial on $\C^k$ whose kernel is $H_j$.
Let $(a_j)_{j=1}^n$, $\ka$ be nonzero complex numbers. 
 An associated multidimensional hypergeometric integral is an integral of the form
\bea
I=\int_\ga {\prod}_{j=1}^n f_j(t_1,\dots,t_k)^{\frac{a_j}\ka} dt_1\wedge\dots\wedge dt_k,
\eea
where $\ga$ is a cycle in the complement to the union of the hyperplanes.  Assume that the hyperplanes depend on parameters $z_1,\dots,z_n$ and move parallelly to themselves
when the parameters change. Then the integral extends to a multivalued holomorphic function of the parameters. The holomorphic function is called a multidimensional hypergeometric function associated with this family of arrangements. The simplest example of such a function  is the classical hypergeometric function.

The multidimensional hypergeometric functions can be combined into collections so that the functions of a collection satisfy a system  of first order linear differential equations called the Gauss-Manin differential equations. 

If all polynomials $(f_j)_{j=1}^n$ have integer coefficients and the numbers $(a_j)$, $\ka$ are integers, then the Gauss-Manin differential equations can be reduced modulo a prime integer $p$ large enough.
The goal of this paper is to
construct polynomial solutions of the Gauss-Manin differential equations over the field $\FFF_p$ with $p$ elements. Our solutions are $p$-analogs of the multidimensional hypergeometric integrals.  The construction of the solutions
is motivated by  the classical  paper   \cite{Ma} by
Yu.I.\,Manin, cf.  Section ``Manin's Result: The Unity of Mathematics'' in \cite{Cl},
see also \cite{SV2, V5}.

\smallskip
The paper is organized as follows.  In Section \ref{Sec Arr} we remind the basic notions
associated with an affine arrangement of hyperplanes in $\C^k$.
In Section \ref{sec pt} we consider a family of arrangements of hyperplanes in $\C^k$
whose hyperplanes move parallelly to themselves when the parameters of the family  change. 
We introduce the Gauss-Manin differential equations and multidimensional hypergeometric integrals.
We show that the multidimensional hypergeometric integrals satisfy the Gauss-Manin differential equations, see Theorem \ref{thm GM}. In Section \ref{sec 4} we consider the reduction of this situation
modulo $p$ and construct polynomial solutions of the Gauss-Manin differential equations over $\FFF_p$,
see Theorem \ref{thm m},  which is
the main result of this paper. We interpret our solutions as integrals over
$\FFF_p^k$ under certain conditions, see Theorem \ref{thm 2int}.
 Such integrals could be considered as $p$-analogs of the multidimensional hypergeometric integrals.
In Section \ref{sec Ex} we consider examples. Under certain conditions we
interpret our polynomial solutions as  sums over points on some hypersurfaces over $\FFF_p$, see Theorem \ref{thm pts p3}. That statement is analogous to the interpretation in Manin\rq{}s  paper  \cite{Ma} 
of the number of points on an  elliptic curve depending on a parameter as a solution of a 
Gauss hypergeometric differential equation. In Section \ref{sec BA} 
we briefly discuss the associated Bethe ansatz. We 
introduce a system of the Bethe ansatz equations and construct a common eigenvector to geometric
Hamiltonians out of every solution  of the Bethe ansatz equations, see Theorem \ref{thm BAp}.
 We show that the Bethe eigenvectors
corresponding to distinct solutions are orthogonal with respect to the associated symmetric contravariant form, see Corollary \ref{cor orth}.

\section{Arrangements}
\label{Sec Arr}

We recall some facts about hyperplane arrangements, Orlik-Solomon algebras and 
flag complexes  from \cite{SV1}.

\subsection{An affine arrangement}
\label{An affine arrangement}
Let $k,n$ be positive integers, $k<n$. Denote $J=\{1,\dots,n\}$.

Let $\A =(H_j)_{j\in J}$,  be an arrangement of $n$ affine hyperplanes in
$\C^k$. Denote
$U = \C^k - \cup_{j\in J} H_j$,
the complement.
An {\it edge} $X_\al \subset \C^k$ of the arrangement $\A$ is a nonempty intersection of some
hyperplanes  of $\A$. Denote by
 $J_\al \subset J$ the subset of indices of all hyperplanes containing $X_\al$.
Denote
$l_\al = \mathrm{codim}_{\C^k} X_\al$.

We always assume that the arrangement $\A$ is {\it essential}, that is, $\A$ has a vertex, an edge which is a point.

An edge is called {\it dense} if the subarrangement of all hyperplanes containing it is irreducible:
the hyperplanes cannot be partitioned into nonempty sets so that, after a change of coordinates,
hyperplanes in different sets are in different coordinates. In particular, each hyperplane of $\A$ is
a dense edge.

\subsection{Flag complex}
For $\ell=0,\dots,k$, let $\on{Flag}^\ell(\A)$ denote the set of all flags
\bea
\C^k=L^0\supset L^1\supset\dots\supset L^\ell
\eea
with each $L^j$ an edge of $\A$ of codimension $j$. Let $\F^\ell(\A,\Z)$ denote the
quotient of the free abelian group on $\on{Flag}^\ell(\A)$ by the following relations. 
For every flag with a gap 
\bea \widehat F = (L^0 \supset L^1 \supset L^{i-1} \supset L^{i+1} \supset \dots \supset L^k),
\qquad i<k,
\eea 
we impose 
\bean 
\label{flagrel}
{\sum}_{F\supset \widehat F} F =0 
\eean
in $\F^\ell(\A,\Z)$, where the sum is over all flags $F = (\widetilde L^0 \supset \widetilde L^1 \supset \dots \widetilde \supset L^\ell) \in \on{Flag}^\ell(\A)$ such that $\widetilde L^j = L^j$ for all $j\neq i$.   The abelian group $\F^\ell(\A,\Z)$ is a free abelian group,
see \cite[Theorem 2.9.2]{SV1}.

There is an ``extension of flags'' differential $d: \F^\ell(\A, \Z) \to \F^{\ell+1}(\A,\Z)$ defined by 
\bea
 d(L^0 \supset L^1 \supset \dots \supset L^\ell) = 
{\sum}_{L^{\ell+1}} (L^0 \supset L^1 \supset \dots \supset L^\ell \supset L^{\ell+1}), 
\eea
where the sum is over all edges $L^{\ell+1}$ of $\A$ of codimension $\ell+1$ contained in $L^\ell$. It follows from \Ref{flagrel} that $d^2=0$. Thus we have a complex, the \emph{flag complex}, $(\F^\bl(\A,\Z), d)$.

\subsection{Orlik-Solomon algebra}
Define abelian groups $\OS^\ell(\A,\Z)$, $\ell=0,1,\dots,k$ as follows. For $\ell=0$, 
set $\OS^0(\A,\Z) = \Z$. For $\ell\geq 1$, $\OS^\ell(\A,\Z)$ is generated by 
$\ell$-tuples $(H_1,\dots,H_\ell)$ of hyperplanes $H_i\in \A$, subject to the relations:
\begin{enumerate}
\item[(i)] $(H_1,\dots,H_\ell) =0$ if $H_1,\dots, H_\ell$ are not in general position (i.e. if $\codim H_1 \cap \dots \cap H_\ell\neq \ell$);
\item[(ii)] $(H_{\sigma(1)},\dots, H_{\sigma(\ell)}) = (-1)^{|\sigma|} (H_1,\dots, H_\ell)$ for every permutation $\sigma \in \Sigma_\ell$;
\item[(iii)] for any $\ell+1$ hyperplanes $H_1,\dots,H_{\ell+1}$ that have non-empty intersection, $H_1\cap\dots\cap H_{\ell+1}\neq \emptyset$, and that are not in general position,
\bea
{\sum}_{i=1}^{\ell+1} (-1)^i (H_1,\dots,\widehat H_{i},\dots, H_{\ell+1}) =0,
\eea 
where $\widehat H_i$ denotes omission. 
\end{enumerate}
The abelian group $\OS^\ell(\A,\Z)$ is a free abelian group,
see  \cite{Bj}, \cite[Theorem 2.9.2]{SV1}.

The \emph{Orlik-Solomon algebra} of the arrangement $\A$ is the direct sum 
$\OS^\bl(\A, \Z)=$ $\oplus_{\ell=0}^k \OS^\ell(\A, \Z)$ endowed with the product given by 
$(H_1,\dots, H_i) \wedge (H'_1,\dots,H'_j) = (H_1,\dots,H_i,H'_1,\dots, H'_j)$. It is a graded skew-commutative algebra over $\Z$.

\subsection{Orlik-Solomon algebra as an algebra of differential forms}
\label{osdf}
For each hyperplane $H\in \A$, pick a polynomial $f_H$ of degree one on $\C^k$ whose zero set is $H$, i.e. let $f_H=0$ be an affine equation for $H$. Consider the logarithmic differential form 
\bea 
\iota(H):= d\log f_H = \frac{df_H}{f_H}
\eea 
on $\C^k$. Note that $\iota(H)$ does not depend on the choice of $f_H$ but only on $H$. Let $\bar\OS^\bl(\A,\Z)$ be the $\Z$-algebra of differential forms generated by $1$ and $\iota(H)$, $H\in \C$. The assignment $H\mapsto \iota(H)$ defines an isomorphism $\OS^\bullet(\A,\Z)\xrightarrow\sim \bar{\OS}^\bullet(\A,\Z)$ of graded algebras. 
\emph{Henceforth we shall not distinguish between $\OS$ and $\bar\OS$.}

\subsection{Duality, see \cite{SV1}, cf. \cite[Section 2.5]{VY}}
The vector spaces $\OS^\ell(\A,\Z)$ and $\FF^\ell(\A,\Z)$ are dual.
The pairing $ \OS^\ell(\A,\Z)\otimes\FF^\ell(\A,\Z) \to \Z$ is defined as follows.
{}For $H_{j_1},...,H_{j_\ell}$ in general position, set
$F(H_{j_1},...,H_{j_\ell})=(\tilde L^0\supset\dots\supset \tilde L^\ell)\in \F^\ell(\A,\Z)$,
where $\tilde L^0=\C^k$ and
$$
\tilde L^i = H_{j_1}\cap\dots\cap H_{j_i},\qquad
i=1,\dots,\ell.
$$
For any $F=(L^0\supset\dots\supset L^\ell) \in \FF^\ell(\A,\Z)$
 define $\langle (H_{j_1},...,H_{j_\ell}), F
 \rangle = (-1)^{|\sigma|},$
if $F
= F(H_{j_{\sigma(1)}},...,H_{j_{\sigma(\ell)}})$ for some $\sigma \in S_\ell$,
and $\langle (H_{j_1},...,H_{j_\ell}), F \rangle = 0$ otherwise.

\subsection{Flag  and Orlik-Solomon spaces over a field $\FFF$} For any field $\FFF$ and $\ell=0,\dots,k$ we define
\bean
\label{OS F}
\F^\ell(\A,\FFF) = \F^\ell(\A,\Z)\ox_\Z \FFF,
\qquad
\OS^\ell(\A,\FFF) = \OS^\ell(\A,\Z)\ox_\Z \FFF.
\eean

\subsection{Weights}
\label{sec weights}
An arrangement $\A$ is {\it weighted} if a map $a: J\to \C^\times$, 
$j\mapsto a_j,$ is given;
 $a_j$ is called the {\it weight} of $H_j$.
For an edge $X_\al$, define its weight as
$a_\al = \sum_{j\in J_\al}a_j$.

\subsection{Contravariant form and map, see \cite{SV1}}
The weights  determine a symmetric bilinear form $S^{(a)}$ on $\F^\ell(\A,\C)$, given by
\bea
S^{(a)}(F_1,F_2) =
{\sum}_{\{j_1, \dots , j_p\} \subset J} \ a_{j_1} \cdots a_{j_\ell}
\ \langle (H_{j_1}, \dots , H_{j_\ell}), F_1 \rangle
\langle (H_{j_1}, \dots , H_{j_\ell}), F_2 \rangle \ ,
\eea
where the sum is over all unordered $\ell$-element subsets.
The form is called the  {\it contravariant form}. It defines a homomorphism
 $$
\mathcal S^{(a)} : \FF^\ell(\A,\C) \to \FF^\ell(\A,\C)^*\simeq \OS^\ell(\A,\C),
\quad
 (L^0\supset\dots\supset L^\ell)\ \mapsto \
\sum \ a_{j_1} \cdots a_{j_\ell}\ (H_{j_1}, \dots , H_{j_\ell}),
$$
 where the sum is taken over all $\ell$-tuples $(H_{j_1},...,H_{j_\ell})$ such that
$$
H_{j_1} \supset L^1,\ {}\ .\ .\ .\ {} , \ {} H_{j_\ell}\supset L^\ell .
$$

\begin{thm} 
[{\cite[Theorem 3.7]{SV1}}]
For $\ell=1,\dots,k$, choose a basis of the free abelian group $\F^\ell(\A,\Z)$. Then with respect to that basis
the determinant of the contravariant form $S^{(a)}$ on $\F^\ell(\A,\Z)$ equals the product of suitable nonnegative integer
powers of the weights of all dense edges of $\A$ of codimension $\leq \ell$.

\end{thm}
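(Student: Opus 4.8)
The plan is to regard $\det S^{(a)}$ as a homogeneous polynomial in the weights $a_1,\dots,a_n$ and to factor it by locating the hyperplanes along which the contravariant form degenerates. Each Gram matrix entry $S^{(a)}(F_i,F_j)$ is, by its very definition, a homogeneous polynomial of degree $\ell$ in $(a_j)$; hence, writing $d_\ell=\rank\F^\ell(\A,\Z)$ (finite, as the group is free of finite rank), $\det S^{(a)}$ is a homogeneous polynomial of degree $\ell\>d_\ell$ with integer coefficients. Since the theorem only asserts a factorization into nonnegative powers of the linear forms $a_\al=\sum_{j\in J_\al}a_j$, it suffices to establish two things: (a) $\det S^{(a)}\not\equiv 0$; and (b) the zero locus of $\det S^{(a)}$ is contained in $\bigcup_\al\{a_\al=0\}$, the union over dense edges $X_\al$ with $l_\al\le\ell$. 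Indeed, a nonzero homogeneous polynomial whose vanishing locus lies in such a finite union of hyperplanes must be a constant times a product $\prod_\al a_\al^{m_\al}$ with $m_\al\ge 0$: each irreducible factor cuts out an irreducible hypersurface contained in one of the $\{a_\al=0\}$, hence (as $a_\al$ is linear, so irreducible) proportional to some $a_\al$. Distinct edges give distinct linear forms $a_\al$, because $X_\al=\bigcap_{j\in J_\al}H_j$ recovers $J_\al$ from $X_\al$; so this is exactly the claimed form, the constant being a nonzero integer by comparison of coefficients.

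For (a) I would exhibit one weight system making $S^{(a)}$ nondegenerate, equivalently show nondegeneracy for generic $a$. The conceptual reason is that, under the duality $\OS^\ell(\A,\C)\simeq\F^\ell(\A,\C)^*$, the map $\mc S^{(a)}$ is the period pairing between the flag complex and the Aomoto (twisted de Rham) complex of the weighted arrangement, and for generic weights the associated rank-one local system has cohomology concentrated in top degree, forcing $\mc S^{(a)}$ to be an isomorphism. A self-contained alternative is an explicit induction whose base case $\ell=1$ is transparent: the flag group $\F^1$ is free on the hyperplanes, the form is the diagonal matrix $\diag(a_1,\dots,a_n)$, so $\det S^{(a)}|_{\F^1}=\prod_{j=1}^n a_j=\prod a_{H_j}$, the product over the codimension-one (necessarily dense) edges; the nonvanishing for higher $\ell$ then drops out of the factorization in the next step.

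The heart of the proof, and the step I expect to be the main obstacle, is (b): pinning down exactly which hyperplanes can occur, i.e.\ showing the form can degenerate only when $a_\al=0$ for a \emph{dense} edge of codimension $\le\ell$. The tool I would develop is a local factorization of $S^{(a)}$ along an edge. Fix an edge $X_\al$ of codimension $p\le\ell$ and split each flag into its part transverse to $X_\al$ and its part inside $X_\al$; localizing, let $\A_\al$ be the central arrangement of the hyperplanes through $X_\al$ in the $p$-dimensional transversal and $\A^\al$ the arrangement induced on $X_\al$. One shows that, after passing to the associated graded of the filtration of $\F^\bullet$ by the position of flags relative to $X_\al$, the contravariant form near $X_\al$ becomes a tensor product of the contravariant form of $\A_\al$ (with weights $a_j$, $j\in J_\al$) and that of $\A^\al$. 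Two consequences finish the argument. First, if $X_\al$ is not dense then $\A_\al$ splits as a product of subarrangements in complementary coordinates, so $J_\al=J_{\al'}\sqcup J_{\al''}$ and the form of $\A_\al$ factors accordingly; its degeneration is governed by the dense edges of the factors and never by the single hyperplane $\{a_{\al'}+a_{\al''}=0\}=\{a_\al=0\}$, so non-dense edges contribute no factor. Second, for a dense $X_\al$ the rank drop of $S^{(a)}$ along $\{a_\al=0\}$, hence the exponent $m_\al$, is computed from the corresponding rank drop for the smaller arrangement $\A_\al$ times the rank of the $\A^\al$-factor, which is handled by induction on codimension and on $\ell$ with the $\ell=1$ diagonal computation as base case. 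Establishing the tensor-product factorization rigorously, together with the exact bookkeeping of the graded pieces of $\F^\bullet$ along $X_\al$, is the technically demanding part; everything else is formal once it is in place.
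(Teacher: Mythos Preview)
The paper does not prove this theorem; it is quoted without proof as \cite[Theorem 3.7]{SV1} and then only the corollary (nondegeneracy when all dense-edge weights are nonzero) is used downstream. So there is no in-paper proof to compare against.

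Your outline is in fact the strategy of the original \cite{SV1} proof: treat $\det S^{(a)}$ as a homogeneous integer polynomial in the $a_j$, show it is not identically zero, and identify its irreducible factors by a local (edge-wise) tensor factorization of the flag space and the form, with the reducible/irreducible dichotomy of the localized arrangement $\A_\al$ controlling whether the linear form $a_\al$ actually divides the determinant. The filtration of $\F^\bullet$ by position relative to an edge and the resulting block-triangular shape of $\mc S^{(a)}$ is exactly what \cite{SV1} sets up; you have correctly located the one genuinely technical step.

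Two small points. First, your argument for (a) is circular as written: you cannot defer nonvanishing to ``the factorization in the next step,'' because step (b) only constrains the zero locus \emph{assuming} the polynomial is nonzero. You need an independent anchor for the induction, e.g.\ a direct check for a generic (normal-crossings) arrangement in each degree, or the observation that on each graded piece the block is, after the tensor splitting, a product of $\ell=1$ diagonal forms of strictly smaller arrangements. Second, your factorization yields $\det S^{(a)}=c\cdot\prod_\al a_\al^{m_\al}$ with $c\in\Z\setminus\{0\}$, whereas the theorem asserts equality with the monomial itself. Pinning $c=\pm 1$ is not automatic from the hypersurface argument; in \cite{SV1} it comes from the explicit block-triangular decomposition over $\Z$ (so each diagonal block already has determinant a monomial in the $a_\al$ with unit leading coefficient), not from a posteriori specialization.
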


\begin{cor}
\label{cor iso}
If the weights of all dense edges of $\A$ are nonzero, then the contravariant map
$\mc S^{(a)}: \F^\ell(\A,\C)\to \OS^\ell(\A,\C)$ is an isomorphism for all $\ell$.

\end{cor}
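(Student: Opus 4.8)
The plan is to deduce the corollary directly from the preceding Theorem by reinterpreting the contravariant map $\mc S^{(a)}$ as the map induced by a bilinear form and observing that its invertibility is governed by a single determinant. First I would fix $\ell$ and recall from the duality subsection that the perfect pairing $\OS^\ell(\A,\C)\otimes\F^\ell(\A,\C)\to\C$ identifies $\OS^\ell(\A,\C)$ with the dual space $\F^\ell(\A,\C)^*$. Under this identification the contravariant map $\mc S^{(a)}\colon\F^\ell(\A,\C)\to\F^\ell(\A,\C)^*\simeq\OS^\ell(\A,\C)$ is exactly the map canonically associated with the symmetric bilinear form $S^{(a)}$, namely $F_1\mapsto S^{(a)}(F_1,\,\cdot\,)$; this is immediate from comparing the two displayed formulas in the definition of the contravariant form and map.

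Next I would choose a basis $\{F_i\}$ of the free abelian group $\F^\ell(\A,\Z)$ as in the Theorem and take the dual basis on $\OS^\ell$. With respect to these bases the matrix of $\mc S^{(a)}$ is precisely the Gram matrix $\big(S^{(a)}(F_i,F_j)\big)$ of the contravariant form. Since $\F^\ell(\A,\Z)$ is free of finite rank and $\OS^\ell(\A,\C)\simeq\F^\ell(\A,\C)^*$ has the same dimension, the map $\mc S^{(a)}$ is a linear map between complex vector spaces of equal finite dimension; hence it is an isomorphism if and only if $\det\big(S^{(a)}(F_i,F_j)\big)\neq 0$. Because the chosen basis $\{F_i\}$ has integer entries, this determinant is the same whether computed over $\Z$ or over $\C$, so the Theorem applies and gives that it equals $\prod_\al a_\al^{\,n_\al}$, a product of nonnegative integer powers of the weights $a_\al$ of the dense edges $X_\al$ of codimension $\le\ell$.

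Finally I would invoke the hypothesis that every dense edge of $\A$ has nonzero weight: each factor $a_\al$ occurring with positive exponent is then a nonzero complex number, while factors with exponent $0$ contribute $1$, so the entire product is nonzero. Therefore $\det\big(S^{(a)}(F_i,F_j)\big)\neq 0$, the map $\mc S^{(a)}$ is an isomorphism, and since $\ell$ was arbitrary this holds for all $\ell$. There is essentially no deep obstacle here beyond bookkeeping: the only points requiring care are the identification of $\mc S^{(a)}$ with the Gram matrix of $S^{(a)}$ under the duality isomorphism, and the observation that the determinant formula of the Theorem can vanish only when some dense-edge weight vanishes, which the hypothesis excludes.
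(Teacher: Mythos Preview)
Your proposal is correct and is exactly the intended deduction: the paper gives no explicit proof of the corollary, leaving it as the immediate consequence of the preceding determinant formula, and you have spelled out precisely that argument. The only remark is that your aside about the determinant being ``the same whether computed over $\Z$ or over $\C$'' is unnecessary (the Gram matrix already has complex entries since the weights are complex), but this does not affect the validity of the proof.
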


\subsection{Aomoto complex}

 Define
 \bean
\label{nu(a)}
\nu(a) = {\sum}_{j\in J} a_j (H_j)\ {}\ \in \ {}\ \OS^1(\A,\C)\ .
\eean
 Multiplication by $\nu(a)$ defines a differential
 $$
d^{(a)}\ :\   \OS^\ell(\A,\C)\ \to\ \OS^{\ell+1}(\A,\C) ,
\qquad
 x \ \mapsto\ x\wedge \nu(a) ,
$$
on  $\OS^\bullet(\A,\C)$, \ $(d^{(a)})^2=0$. The complex $(\OS^\bullet(\A,\C), d^{(a)})$
is called the {\it Aomoto complex}. 
The {\it master function} corresponding to the weighted arrangement $(\A, a)$ is the function
\bean
\label{mf1}
\Phi=\Phi_{\A,a}={\prod}_{j\in J} f_{H_j}^{a_j},
\eean
where each $f_{H_j}=0$ is an affine equation for the hyperplane $H_j$. Then $\nu(a) = d\Phi/\Phi$.

\begin{thm} [{\cite[Lemma 3.2.5]{SV1} and \cite[Lemma 5.1]{FMTV}}]
The Shapovalov map is a homomorphism of complexes
\bea
\mc S^{(a)} : (\F^\bullet(\A,\Z),d) \to (\OS^\bullet(\A,\Z),d^{(a)}).
\eea

\end{thm}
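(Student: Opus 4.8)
The plan is to establish the chain-map identity $d^{(a)}\circ\mathcal{S}^{(a)}=\mathcal{S}^{(a)}\circ d$ by evaluating both composites on an arbitrary flag $F=(L^0\supset L^1\supset\dots\supset L^\ell)$, expanding each as an explicit linear combination of Orlik-Solomon generators, and matching the two expansions term by term. Both composites take values in $\OS^{\ell+1}$, and two formal combinations of generators are equal there as soon as they have identical coefficients on identical generators, independently of the defining relations; so it is enough to produce a coefficient-preserving bijection between the nonvanishing generators that occur on each side.

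First I would expand the left-hand side. By definition $\mathcal{S}^{(a)}(F)=\sum a_{j_1}\cdots a_{j_\ell}(H_{j_1},\dots,H_{j_\ell})$, summed over $\ell$-tuples with $H_{j_i}\supset L^i$; relation (i) kills every tuple that is not in general position, and for a surviving tuple one has $\bigcap_{i=1}^\ell H_{j_i}=L^\ell$, since this intersection contains the codimension-$\ell$ edge $L^\ell$ and is itself of codimension $\ell$. Wedging with $\nu(a)=\sum_j a_j(H_j)$ yields $\sum a_{j_1}\cdots a_{j_\ell}a_j\,(H_{j_1},\dots,H_{j_\ell},H_j)$, and again by relation (i) a term survives only when $(H_{j_1},\dots,H_{j_\ell},H_j)$ is in general position, which given $\bigcap_i H_{j_i}=L^\ell$ means exactly that $H_j\cap L^\ell$ is a nonempty edge of codimension $\ell+1$ contained in $L^\ell$.

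Next I would expand the right-hand side, where $d(F)=\sum_{L^{\ell+1}}(L^0\supset\dots\supset L^\ell\supset L^{\ell+1})$ runs over codimension-$(\ell+1)$ edges $L^{\ell+1}\subset L^\ell$. Applying $\mathcal{S}^{(a)}$ to each extended flag gives $\sum a_{j_1}\cdots a_{j_{\ell+1}}(H_{j_1},\dots,H_{j_{\ell+1}})$ over $(\ell+1)$-tuples with $H_{j_i}\supset L^i$ and $H_{j_{\ell+1}}\supset L^{\ell+1}$, and a short codimension count shows the surviving terms are exactly those with $\bigcap_{i=1}^\ell H_{j_i}=L^\ell$ and $H_{j_{\ell+1}}\cap L^\ell=L^{\ell+1}$. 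The observation that closes the proof is that for any hyperplane $H$ with $H\cap L^\ell$ of codimension $\ell+1$ this intersection is the unique codimension-$(\ell+1)$ edge below $L^\ell$ contained in $H$; hence $H_j\mapsto(H_{j_{\ell+1}}=H_j,\ L^{\ell+1}=H_j\cap L^\ell)$ is a bijection between the surviving terms of the two sides, and it visibly carries the generator and its coefficient $a_{j_1}\cdots a_{j_\ell}a_j$ to the matching ones on the right.

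I do not expect a conceptual obstacle here; the only real care is the bookkeeping that isolates the vanishing generators (those removed by relation (i) for failing general position, including the case where $H_j$ is parallel to $L^\ell$) from the surviving ones, and the check that the correspondence $H_j\leftrightarrow(H_{j_{\ell+1}},L^{\ell+1})$ is an honest bijection and not merely a surjection. It is worth noting that relations (ii) and (iii) are never used: once the surviving terms are identified as the same generators with the same coefficients, equality in $\OS^{\ell+1}$ is automatic. Finally, since the computation uses only multiplication of $\mathcal{S}^{(a)}$-images against $\nu(a)$ together with the incidence combinatorics of the edges, it is valid over any coefficient ring containing the weights, so the integral statement and its $\C$-linear extension follow at once.
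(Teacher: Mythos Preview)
Your argument is correct. The paper does not supply its own proof of this statement; it simply cites \cite[Lemma~3.2.5]{SV1} and \cite[Lemma~5.1]{FMTV}, so there is no in-paper proof to compare against. Your direct term-by-term matching---reducing both $d^{(a)}\mathcal S^{(a)}(F)$ and $\mathcal S^{(a)}d(F)$ to sums over tuples $(H_{j_1},\dots,H_{j_\ell},H_j)$ in general position with $\bigcap_i H_{j_i}=L^\ell$ and $H_j\cap L^\ell$ a codimension-$(\ell{+}1)$ edge, then observing that $H_j\mapsto (L^{\ell+1}=H_j\cap L^\ell,\ j_{\ell+1}=j)$ is a bijection---is exactly the standard verification one finds in the cited sources. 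Your remark that only relation~(i) is needed is accurate: repeated hyperplanes and degenerate tuples are already disposed of by the codimension condition, so skew-symmetry~(ii) and the boundary relation~(iii) play no role. The final comment about coefficients is also on point, since as stated the theorem only makes literal sense over a ring containing the weights~$a_j$.
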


\subsection{Singular vectors}
An element $v \in \FF^k(\A,\C)$ is called  {\it singular}  if
 $\langle  d^{(a)} \OS^{\ell-1}(\A,\C), v\rangle =0$.
Denote by
$$
\Sing \FF^k(\A,\C) \subset \FF^k(\A,\C)
$$
the subspace of all singular vectors.

\subsection{Arrangements with normal crossings only}
\label{sec nco}

 An essential arrangement $\A$ is with {\it normal crossings} only,
if exactly $k$ hyperplanes meet at every vertex of $\A$.
Assume that $\A$ is an essential arrangement with normal crossings only.

A subset $\{j_1,\dots,j_\ell\}\subset J$ is called {\it  independent} if the hyperplanes
$H_{j_1},\dots,H_{j_\ell}$ intersect transversally.
A basis of $\OS^\ell(\A,\C)$ is formed by
$(H_{j_1},\dots,H_{j_\ell})$ where
$\{{j_1} <\dots <{j_\ell}\}$  are independent  $\ell$-element subsets of
$J$. The dual basis of $\FF^\ell(\A,\C)$ is formed by the corresponding vectors
$F(H_{j_1},\dots,H_{j_\ell})$.
These bases of $\OS^\ell(\A,\C)$ and $\FF^\ell(\A,\C)$ are called {\it standard}.

In $\FF^\ell(\A,\C)$ we  have
\bean
\label{skew}
F(H_{j_1},\dots,H_{j_\ell}) = (-1)^{|\sigma|}
F(H_{j_{\sigma(1)}},\dots,H_{j_{\sigma(\ell)}})
\eean
for any permutation  $\sigma \in S_\ell$.
For an independent subset $\{j_1,\dots,j_\ell\}$, we have
\\
$
S^{(a)}(F(H_{j_1},\dots,H_{j_\ell}) , F(H_{j_1},\dots,H_{j_\ell})) = a_{j_1}\cdots a_{j_\ell}
$
and
$
S^{(a)}(F(H_{j_1},\dots,H_{j_\ell})$,
\\
 $F(H_{i_1},\dots,H_{i_\ell})) = 0
$
for any distinct elements of the standard basis.

\section{A family of parallelly transported hyperplanes}
\label{sec pt}

\subsection{An arrangement in  $\C^n\times\C^k$}
\label{Ana}
Recall that $J=\{1,\dots,n\}$.
Consider $\C^k$ with coordinates $t_1,\dots,t_k$,\
$\C^n$ with coordinates $z_1,\dots,z_n$, the projection
$\C^n\times\C^k \to \C^n$.
Fix $n$ nonzero linear functions on $\C^k$,
$g_j = b_j^1t_1+\dots + b_j^kt_k$, $j\in J$,
where $b_j^i\in \C$.
Define $n$ linear functions on $\C^n\times\C^k$,
\bean
\label{bij}
f_j = z_j+g_j = z_j + b_j^1t_1+\dots + b_j^kt_k,\qquad j\in J.
\eean
In $\C^n\times \C^k$ define
 the arrangement
\bea
\tilde \A = \{ \tilde H_j\ | \ f_j = 0, \ j\in J \} .
\eea
Denote $\tilde U = \C^n\times \C^k - \cup_{j\in J} \tilde H_j$.

For every fixed $z^0=(z_1^0,\dots,z_n^0)$ the arrangement $\tilde \A$
induces an arrangement $\A(z^0)$ in the fiber over $z^0$ of the projection. We
identify every fiber with $\C^k$. Then $\A(z^0)$ consists of
hyperplanes $H_j(z^0), j\in J$, defined in $\C^k$ by the same equations
$f_j=0$. Denote
\bean
\label{U(A(z))}
U(\A(z^0)) = \C^k - \cup_{j\in J} H_j(z^0),
\eean
the complement to the arrangement $\A(z^0)$.

We assume that for any $z^0$ the arrangement $\A(z^0)$ has a vertex. This means that
the span of $(g_j)_{j\in J}$ is $k$-dimensional.

A point $z^0\in\C^n$ is called  {\it good} if $\A(z^0)$ has normal
crossings only.  Good points form the complement in $\C^n$ to the union
of suitable hyperplanes called the {\it discriminant}.

\subsection{Discriminant}
\label{Discr}

The collection $(g_j)_{j\in J}$ induces a
matroid structure  $\mc M_\C$ on $J$.  A subset $C=\{i_1,\dots,i_r\}\subset J$ is
a {\it circuit} in $\mc M_\C$  if $(g_i)_{i\in C}$ are linearly dependent but any
proper subset of $C$ gives linearly independent $g_i$'s.

For a circuit $C=\{i_1,\dots,i_r\}$, \  let
$(\la_C^i)_{i\in C}$ be a nonzero collection of complex numbers such that
$\sum_{i\in C}
\la_C^ig_i = 0$. Such a collection  is unique up to
multiplication by a nonzero number.

For every circuit $C$ we fix such a collection
and denote $f_C = \sum_{i\in C} \la_C^iz_i$.
The equation $f_C=0$ defines a hyperplane $H_C$ in
$\C^n$.
It is convenient to assume that $\la_C^i=0$ for $i\in J-C$ and write
$f_C = \sum_{i\in J} \la_C^iz_i$.

For any $z^0\in\C^n$, the hyperplanes $(H_i(z^0))_{i\in C}$ in $\C^k$ have nonempty
intersection if and only if $z^0\in H_C$. If $z^0\in H_C$, then the
intersection has codimension $r-1$ in $\C^k$.

Denote by $\frak C_\C$ the set of all circuits in $\mc M_\C$.
Denote  $\Delta = \cup_{C\in \frak C_\C} H_C$.
The arrangement $\A(z^0)$ in $\C^k$
has normal crossings only, if and only if $z^0\in \C^n-\Delta$.

\subsection{Good fibers}
\label{sec Good fibers}

For any
$z^1, z^2\in \C^n-\Delta$, the spaces $\FF^\ell(\A(z^1),\C)$, $\FF^\ell(\A(z^2),\C)$
 are canonically identified. Namely, a vector $F(H_{j_1}(z^1),\dots,H_{j_\ell}(z^1))$ of the first space
is identified  with
the vector $F(H_{j_1}(z^2),\dots,H_{j_\ell}(z^2))$ of the second.

Assume that weights $a=(a_j)_{j\in J}$ are given. Then each
arrangement $\A(z)$  is weighted.
The identification of spaces $\FF^\ell(\A(z^1),\C)$,
$\FF^\ell(\A(z^2),\C)$ for $z^1,z^2\in\C^n-\Delta$ identifies the corresponding subspaces
$\Sing\FF^k(\A(z^1),\C)$, $\Sing\FF^k(\A(z^2),\C)$ and 
the corresponding contravariant forms.

For a point $z^0\in\C^n-\Delta$, denote $V_\C=\FF^k(\A(z^0),\C)$, 
$\Sing V_\C=\Sing\FF^k(\A(z^0),\C)$.
The triple $(V_\C, \Sing V_\C, S^{(a)})$ does not depend on  $z^0\in\C^n-\Delta$
under the above identification.

\subsection{Geometric Hamiltonians, cf. \cite{V3, V4}}
\label{sec key identity}

For any circuit $C=\{i_1, \dots, i_r\}$, we 
define a linear operator
$L_C : V_\C\to V_\C$ in terms of the standard basis of
 $V_\C$, see Section
\ref{sec nco}.

For $m=1,\dots,r$, denote $C_m=C-\{i_m\}$.
Let $\{{j_1}<\dots <{j_k}\}
\subset J$ be an independent ordered subset and
$F(H_{j_1},\dots,H_{j_k})$
the corresponding element of the standard basis.
Define $L_C : F(H_{j_1},\dots,H_{j_k}) \mapsto 0$ if
$|\{{j_1},\dots,{j_k}\}\cap C| < r-1$.
If $\{{j_1},\dots,{j_k}\}\cap C = C_m$ for some
$1\leq m\leq r$, then using the skew-symmetry property \Ref{skew}
we can write
\bea
F(H_{j_1},\dots,H_{j_k})
\,=\,
\pm\, F(H_{i_1},H_{i_2},\dots,\widehat{H_{i_{m}}},\dots,H_{i_{r-1}}H_{i_{r}},H_{s_1},\dots,H_{s_{k-r+1}})
\eea
with $\{{s_1},\dots,{s_{k-r+1}}\}=
\{{j_1},\dots,{j_k}\}-C_m$.
Define
\bea
L_C
&:&
F(H_{i_1},\dots,\widehat{H_{i_{m}}},\dots,H_{i_{r}},H_{s_1},\dots,H_{s_{k-r+1}})
 \mapsto
\\
&&
\phantom{aaaaaaaaa}
(-1)^m {\sum}_{l=1}^{r} (-1)^l a_{i_l}
F(H_{i_1},\dots,\widehat{H_{i_{l}}},\dots,H_{i_{r}},H_{s_1},\dots,H_{s_{k-r+1}}) .
\eea

\begin{lem} [\cite{V3}]
\label{lem L_C}
The operator $L_C$ is symmetric with respect to the
contravariant form.
\end{lem}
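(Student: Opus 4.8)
The plan is to exploit the fact (recorded at the end of Section \ref{sec nco}) that in the standard basis the contravariant form $S^{(a)}$ is \emph{diagonal}: $S^{(a)}(F(H_{j_1},\dots,H_{j_k}),F(H_{j_1},\dots,H_{j_k}))=a_{j_1}\cdots a_{j_k}$ while distinct standard basis vectors are orthogonal. Because $S^{(a)}$ has no off-diagonal entries, the symmetry of $L_C$ reduces to an explicit check on matrix entries, and the whole verification localizes to small $L_C$-invariant blocks.

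First I would sort the standard basis of $V_\C$ by its intersection with the circuit $C$. For an independent $k$-subset $\{j_1,\dots,j_k\}$ one has $|\{j_1,\dots,j_k\}\cap C|\le r-1$, since $C$ is a circuit and so no independent set can contain all of $C$. By definition $L_C$ kills every basis vector with $|\{j_1,\dots,j_k\}\cap C|<r-1$, so only the vectors with $\{j_1,\dots,j_k\}\cap C=C_m$ survive. Grouping these by the complementary set $S=\{s_1,\dots,s_{k-r+1}\}=\{j_1,\dots,j_k\}\setminus C$ (disjoint from $C$), I obtain for each admissible $S$ the $r$ vectors
\[
e_l:=F(H_{i_1},\dots,\widehat{H_{i_l}},\dots,H_{i_r},H_{s_1},\dots,H_{s_{k-r+1}}),\qquad l=1,\dots,r,
\]
spanning a subspace $W_S$.

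Next I would verify two structural facts. Each $e_l$ is a genuine basis vector: since $C$ is a circuit the relation $\sum_{i\in C}\la_C^i g_i=0$ has all coefficients nonzero, so the $(g_i)_{i\in C}$ span an $(r-1)$-dimensional space and every $(r-1)$-subset $(g_i)_{i\in C_l}$ spans that same space; hence once one $C_m\cup S$ is independent, every $C_l\cup S$ is independent, and $L_C$ maps $W_S$ into itself. Moreover the subspaces $W_S$ (over all $S$) together with $\ker L_C$ are pairwise $S^{(a)}$-orthogonal, because two standard basis vectors pair nontrivially only when their index sets coincide, and $C_l\cup S=C_{l'}\cup S'$ forces $S=S'$, $l=l'$. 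Therefore $S^{(a)}(L_Cx,y)=S^{(a)}(x,L_Cy)$ holds automatically unless $x$ and $y$ lie in one common $W_S$, and I am reduced to a single sector.

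Inside a fixed $W_S$ everything is explicit. Put $a_S=\prod_{s\in S}a_s$ and $A=\prod_{i\in C}a_i$. Diagonality gives $S^{(a)}(e_l,e_l)=a_S\,A/a_{i_l}$, and the definition of $L_C$ reads $L_Ce_m=(-1)^m\sum_{l=1}^r(-1)^l a_{i_l}e_l$, so the entry multiplying $e_l$ in $L_Ce_m$ is $(-1)^{l+m}a_{i_l}$. Hence
\[
S^{(a)}(L_Ce_m,e_{m'})=(-1)^{m+m'}a_{i_{m'}}\cdot\frac{a_S A}{a_{i_{m'}}}=(-1)^{m+m'}a_S A,
\]
and the symmetric computation yields $S^{(a)}(e_m,L_Ce_{m'})=(-1)^{m+m'}a_S A$, so the two coincide. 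The essential mechanism is that the weight $a_{i_l}$ attached to the $l$-th term of $L_C$ cancels exactly the factor $a_{i_l}^{-1}$ in the squared length of $e_l$. The main things to get right are the block-diagonal reduction, in particular the independence claim for all $C_l\cup S$, and the consistent tracking of the signs $(-1)^l,(-1)^m$ coming from the skew-symmetry normalization \eqref{skew}; once these are settled, the symmetric cancellation above completes the proof.
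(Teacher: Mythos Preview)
Your argument is correct. The paper itself does not supply a proof of this lemma; it is simply quoted from \cite{V3}. Your direct verification---decomposing $V_\C$ orthogonally into the span $V_0$ of basis vectors with $|\{j_1,\dots,j_k\}\cap C|<r-1$ (killed by $L_C$) and the $L_C$-invariant blocks $W_S$, then checking symmetry on each $W_S$ via the cancellation $a_{i_{m'}}\cdot(a_S A/a_{i_{m'}})=a_S A$---is exactly the natural elementary proof and is how the result is established in the cited reference. The two points you single out as the main risks (that all $C_l\cup S$ are simultaneously independent once one of them is, and the sign bookkeeping from \eqref{skew}) are the only places where something could go wrong, and you handle both correctly: the first follows because every $(r-1)$-subset of a circuit spans the same $(r-1)$-dimensional space, and the signs indeed combine to the symmetric factor $(-1)^{m+m'}$.
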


Consider  the logarithmic differential 1-forms
\bea
\omega_j = \frac {df_j}{f_j}, \ j\in J,
\qquad
\omega_C = \frac {df_C}{f_C}, \ C\in \frak C_\C
\eea
in variables $t_1,\dots,t_k,z_1,\dots,z_n$.
For any circuit $C=\{i_1,\dots,i_r\}$, we have
$$
\omega_{i_1} \wedge \dots \wedge \omega_{i_{r}} =
\omega_C \wedge {\sum}_{l=1}^{r} (-1)^{l-1}
\omega_{i_1} \wedge \dots \wedge \widehat{\omega_{i_{l}}} \wedge \dots \wedge
\omega_{i_{r}} .
$$

\begin{lem} [{\cite[Lemma 4.2]{V3}, \cite[Lemma 5.4]{V4}}] 
\label{lem 2}
We have
\bean
\label{form H}
&&
{\sum}_{{\rm independent } \atop \{j_1 < \dots < j_k\} \subset J }
\Big( {\sum}_{j\in J} a_j
 \omega_j  \Big) \wedge \omega_{j_1} \wedge \dots \wedge \omega_{j_k}
\otimes
F(H_{j_1}, \dots , H_{j_k}) =
\\
&&
\phantom{aaaa}
{\sum}_{{\rm independent } \atop \{j_1 < \dots < j_k\} \subset J }
{\sum}_{C\in \frak C_\C}
\omega_{C} \wedge
 \omega_{j_1} \wedge \dots \wedge \omega_{j_k}
\otimes
L_C
F(H_{j_1}, \dots , H_{j_k}) .
\notag
\eean
\end{lem}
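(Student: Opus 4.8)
The plan is to transform the left-hand side of \Ref{form H} directly into the right-hand side, using the circuit relation displayed immediately before the lemma as the only nontrivial input. First I would expand $\nu(a)=\sum_{j\in J}a_j\omega_j$ in the left-hand side. For an independent subset $B=\{j_1<\dots<j_k\}$ the wedge $\omega_j\wedge\omega_{j_1}\wedge\dots\wedge\omega_{j_k}$ vanishes whenever $j\in B$, so only the indices $j\notin B$ survive. Since the span of $(g_j)_{j\in J}$ is $k$-dimensional, for each such pair the set $\{j\}\cup B$ of size $k+1$ is dependent and contains a unique fundamental circuit $C=\{i_1,\dots,i_r\}$ with $j\in C$ and $C\setminus\{j\}\subset B$; thus $|C\cap B|=r-1$ and $S:=B\setminus C$ has size $k-r+1$. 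This gives a bijection between the pairs $(B,j)$ contributing to the left-hand side and triples $(C,S,p)$, where $p$ records $j=i_p$ and $B=(C\setminus\{i_p\})\cup S$.

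Next I would apply the circuit relation $\omega_{i_1}\wedge\dots\wedge\omega_{i_r}=\omega_C\wedge\sum_{l=1}^r(-1)^{l-1}\omega_{i_1}\wedge\dots\wedge\widehat{\omega_{i_l}}\wedge\dots\wedge\omega_{i_r}$ to each surviving term. After reordering so that the $C$-factors precede the $S$-factors, the term indexed by $(B,j)=((C\setminus\{i_p\})\cup S,\,i_p)$ becomes $\omega_C$ wedged with an alternating sum, over $l$, of the ordered forms attached to the bases $\tilde B_l:=(C\setminus\{i_l\})\cup S$. Collecting all contributions with a fixed circuit $C$, fixed $S$, and fixed $l$ produces precisely the form $\omega_C\wedge\omega_{\tilde B_l}$ tensored with a linear combination of the flags $F(H_{j_1},\dots,H_{j_k})$ associated with $\tilde B_p$, $p=1,\dots,r$.

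The final step is to recognize this linear combination as $L_C$ applied to the flag associated with $\tilde B_l$, which identifies the regrouped left-hand side term-by-term with the right-hand side. The one point requiring care — and the main obstacle — is the sign bookkeeping. Here the decisive observation is that the combination $\omega_{j_1}\wedge\dots\wedge\omega_{j_k}\otimes F(H_{j_1},\dots,H_{j_k})$ is independent of the ordering of $(j_1,\dots,j_k)$: both tensor factors change by $(-1)^{|\sigma|}$ under a permutation $\sigma$ (the form manifestly, the flag by \Ref{skew}), so the product is invariant; the same holds with $F$ replaced by $L_CF$, since $L_C$ is linear. This invariance lets me evaluate everything in the preferred ordering $(i_1,\dots,\widehat{i_p},\dots,i_r,s_1,\dots,s_{k-r+1})$, in which the relevant flag is already in the normalized form used to define $L_C$. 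The left-hand side then carries the coefficient $(-1)^{p-1}(-1)^{l-1}a_{i_p}$, while the definition of $L_C$ supplies $(-1)^l(-1)^pa_{i_p}$; since these agree, the two sides coincide.

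I expect the only genuine work to be confirming this sign match and checking that the index set $\{(C,S,p)\}$ is traversed exactly once on each side. In particular one must verify that all the bases $\tilde B_l$ are simultaneously independent, which holds because $C\setminus\{i_l\}$ spans the same $(r-1)$-dimensional subspace for every $l$, so $(C\setminus\{i_l\})\cup S$ is independent for one value of $l$ if and only if it is for all of them. Because I transform the left-hand side into the right-hand side as a formal sum rather than comparing coefficients, no linear independence of the differential forms $\omega_C\wedge\omega_{j_1}\wedge\dots\wedge\omega_{j_k}$ is needed, which keeps the argument purely combinatorial.
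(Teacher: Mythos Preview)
Your proposal is correct and is exactly the unpacking of the paper's own proof, which says only that ``the lemma is a direct corollary of the definition of the maps $L_C$.'' You have supplied precisely the combinatorial regrouping and sign check that this one-line remark leaves implicit, using the circuit relation displayed just before the lemma as the paper intends.
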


\begin{proof}
The lemma is a direct corollary of the definition of the maps $L_C$.
\end{proof}

The identity in \Ref{form H} is called the {\it key identity}.

\smallskip

Recall that $\omega_C = df_C/f_C$ and $f_C = \sum_{j\in J}\la_C^iz_i$. 
For $i\in J$, introduce the $\End_\C(V_\C)$-valued rational functions in $z_1,\dots,z_n$
by the formula
\bean
\label{K_j}
K_i(z)  =  {\sum}_{C\in \frak C_\C}
\,\frac{\la^i_C}{f_C(z)} \,L_C \,,
\qquad
i\in J .
\eean
Then
\bean
\label{LK}
{\sum}_{C\in \frak C_\C}
\omega_{C} \otimes L_C = {\sum}_{i\in J} dz_i\otimes K_i(z) .
\eean
The functions $K_i(z)$ are called  {\it geometric Hamiltonians}.

\begin{cor}
The geometric  Hamiltonians are symmetric with respect to the contravariant form,
$S^{(a)}(K_i(z)x, y)=S^{(a)}(x, K_i(z)y)$ for $i\in J$, $x,y\in V_\C$.

\end{cor}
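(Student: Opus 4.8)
The plan is to deduce the corollary directly from Lemma \ref{lem L_C}, using only that each geometric Hamiltonian is a scalar linear combination of the operators $L_C$. Fix $i\in J$ and a point $z$ at which all $f_C(z)\neq 0$, so that the formula \Ref{K_j} is defined. For this fixed $z$ every coefficient $\la^i_C/f_C(z)$ is an ordinary complex number, and
\[
K_i(z)=\sum_{C\in\frak C_\C}\frac{\la^i_C}{f_C(z)}\,L_C
\]
is a finite $\C$-linear combination of the $L_C$.

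First I would invoke the bilinearity of the contravariant form $S^{(a)}$ to expand
\[
S^{(a)}(K_i(z)x,\,y)=\sum_{C\in\frak C_\C}\frac{\la^i_C}{f_C(z)}\,S^{(a)}(L_C x,\,y).
\]
Next, I would apply Lemma \ref{lem L_C} termwise, replacing each $S^{(a)}(L_C x,\,y)$ by $S^{(a)}(x,\,L_C y)$. Finally, by bilinearity I would pull the scalar coefficients back inside the second argument and recombine the sum, which recovers $S^{(a)}(x,\,K_i(z)y)$ and yields the asserted identity.

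There is no genuine obstacle here: the entire content of the statement is carried by Lemma \ref{lem L_C}, and the passage from the individual $L_C$ to the Hamiltonian $K_i(z)$ is purely formal. The one point that deserves a moment's care is that the coefficients $\la^i_C/f_C(z)$ are scalars and that $S^{(a)}$ is a $\C$-bilinear, not Hermitian, form, so these coefficients may be extracted from and reinserted into either argument with no complex conjugation. Granting this, the claim is simply the fact that symmetry with respect to a bilinear form is preserved under scalar linear combinations of operators.
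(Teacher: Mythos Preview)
Your proposal is correct and matches the paper's intended argument: the corollary is stated without proof precisely because it is immediate from Lemma~\ref{lem L_C} together with the definition \Ref{K_j} of $K_i(z)$ as a scalar linear combination of the $L_C$. Your observation that $S^{(a)}$ is $\C$-bilinear (so the scalars $\la^i_C/f_C(z)$ pass freely) is the only point worth noting, and you have noted it.
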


\subsection{Gauss-Manin differential equations}
\label{sec GM}

The {\it Gauss-Manin differential equations} with parameter $\ka \in \C^\times$ is the following system of
differential equations on a $V_\C$-valued function $I(z_1,\dots,z_n)$,
\bean
\label{dif eqn}
\kappa \frac{\der I}{\der z_i}(z) = K_i(z)I(z),
\qquad
i\in J,
\eean
where  $K_i(z)$ are the geometric Hamiltonians defined in \Ref{K_j}.

Introduce the master function
\bean
\label{Mast}
\Phi(z,t,a) = {\prod}_{j\in J} f_j(z,t)^{a_j}
 \eean
 on $\tilde U\subset \C^n\times \C^k$.
The function $\Phi(z,t,a)^{1/\ka}$
defines a rank one local system $\mc L_\kappa$
on $\tilde U$, whose horizontal sections
over open subsets of $\tilde U$
 are univalued branches of $\Phi(z,t,a)^{1/\ka}$ multiplied by complex numbers.

For $z^0\in\C^n-\Delta$ and an element $\gamma(z^0)\in H_k(U(\A(z^0)), \mc L_\kappa\vert_{U(\A(z^0))})$,
we interpret the integration  map
$\OS^k(\A(z^0),\C)=V^* \to \C$,
$\om \mapsto \int_{\gamma(z^0)} \Phi(z^0,t,a)^{1/\ka}\om$
as an element of $\FF^k(\A(z^0),\C)=V_\C$.
The vector bundle
\bea
\cup_{z^0\in \C^n-\Delta}\,H_k(U(\A(z^0)), \mc L_\kappa\vert_{U(\A(z^0))}) \to
 {}  \C^n-\Delta
 \eea
has a canonical  flat Gauss-Manin connection. A locally constant section
 $\ga: z\mapsto \gamma(z) \in H_k(U(\A(z)), \mc L_\kappa\vert_{U(\A(z))})$  of the Gauss-Manin connection
defines a $V_\C$-valued function 
\bean
\label{def I}
\phantom{aaa}
 I^{(\ga)}(z_1,\dots,z_n)
=\sum_{{\rm independent } \atop \{j_1 < \dots < j_k\} \subset J }
\left( \int_{\gamma(z)} \Phi(z,t,a)^{1/\ka}  \omega_{j_1} \wedge \dots \wedge \omega_{j_k}\right)
F(H_{j_1},\dots,H_{j_k}) .
\eean
The integrals
\bea
 I^{(\ga)}_{j_1,\dots,j_k}(z_1,\dots,z_n)
= \int_{\gamma(z)} \Phi(z,t,a)^{1/\ka} \omega_{j_1} \wedge \dots \wedge \omega_{j_k}
\eea
are called the {\it
multidimensional hypergeometric integrals} associated with the master function $\Phi(z,t,a)$.

\begin{thm} [\cite{V4}]
\label{thm GM}
The function $I^{(\ga)}$ takes values in $\Sing V_\C$ and is a solutions of the Gauss-Manin differential equations.

\end{thm}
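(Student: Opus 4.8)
```latex
The plan is to establish the two assertions of Theorem~\ref{thm GM} separately: first that $I^{(\ga)}$ lands in $\Sing V_\C$, and second that it satisfies the Gauss-Manin system \Ref{dif eqn}. The common engine for both is Stokes' theorem applied to the closed form $\Phi^{1/\ka}\eta$ for appropriate $\eta$, together with the key identity of Lemma~\ref{lem 2}. First I would record the basic differentiation rule: since $\der_{z_i}\Phi^{1/\ka} = \frac1\ka\,a_i\,\frac{\der f_i}{\der z_i}\,f_i^{-1}\Phi^{1/\ka} = \frac1\ka\,\Phi^{1/\ka}\,\der_{z_i}\log\Phi$, and the forms $\omega_{j_1}\wedge\dots\wedge\omega_{j_k}$ have coefficients that are rational in $z$ and $t$, one can differentiate under the integral sign (legitimate because $\ga(z)$ is a flat section of the Gauss-Manin connection, so the cycle moves holomorphically). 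This reduces everything to manipulating the de Rham cohomology class represented by $\der_{z_i}\big(\Phi^{1/\ka}\,\omega_{j_1}\wedge\dots\wedge\omega_{j_k}\big)$ in the twisted complex.

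For the singularity statement I would argue as follows. By definition, $I^{(\ga)}\in\Sing V_\C$ means $\langle d^{(a)}\OS^{k-1}(\A(z),\C),\,I^{(\ga)}(z)\rangle = 0$. Using the integral representation \Ref{def I} and the duality pairing of Section~2.5, pairing $I^{(\ga)}(z)$ against $d^{(a)}\psi$ for $\psi\in\OS^{k-1}$ amounts to integrating $\Phi^{1/\ka}(d^{(a)}\psi)$ over $\ga(z)$. Now $d^{(a)}\psi = \psi\wedge\nu(a) = \psi\wedge\sum_j a_j\omega_j$, and because $\nu(a)=d\Phi/\Phi$ one checks that $\Phi^{1/\ka}\,d^{(a)}\psi = \ka\,d\big(\Phi^{1/\ka}\psi\big)$ is an exact form on each fiber $U(\A(z))$. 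Integrating an exact twisted form over the cycle $\ga(z)$ gives zero by Stokes, so the pairing vanishes and $I^{(\ga)}(z)\in\Sing V_\C$ for every $z$.

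For the differential equations, apply $\ka\,\der_{z_i}$ to \Ref{def I}. Differentiating the integrand produces $\der_{z_i}\Phi^{1/\ka}\cdot\omega_{j_1}\wedge\dots\wedge\omega_{j_k} = \frac1\ka\,a_i\,\omega_i\wedge\omega_{j_1}\wedge\dots\wedge\omega_{j_k}\,\Phi^{1/\ka}$ (up to the precise sign from $\der_{z_i}\log f_i$), so after summing over the standard basis the left-hand side $\ka\,\der_{z_i}I^{(\ga)}$ is represented by integrating $\big(\sum_j a_j\omega_j\big)\wedge\omega_{j_1}\wedge\dots\wedge\omega_{j_k}\otimes F(H_{j_1},\dots,H_{j_k})$ against $\Phi^{1/\ka}$, which is exactly the left side of the key identity \Ref{form H}. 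The key identity rewrites this as $\sum_C\omega_C\wedge\omega_{j_1}\wedge\dots\wedge\omega_{j_k}\otimes L_C F$. Invoking \Ref{LK} to collect the $\omega_C$-terms into $\sum_i dz_i\otimes K_i(z)$, and using $\omega_C=df_C/f_C = \sum_i(\la_C^i/f_C)\,dz_i$ since $f_C$ depends only on $z$, I would identify the coefficient of $dz_i$ with $K_i(z)$ acting on the integral vector, yielding $\ka\,\der_{z_i}I^{(\ga)} = K_i(z)\,I^{(\ga)}$.

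The main obstacle is bookkeeping the mixed $z$- and $t$-differentials carefully: the forms $\omega_j=df_j/f_j$ contain both $dz_j$ and $dt_m$ components, and one must verify that the $dt$-parts of the key identity integrate to zero over the fiber cycle (again via Stokes in the fiber direction) so that only the $dz_i$-coefficients survive and match the Gauss-Manin operators $K_i(z)$. Equivalently, one shows the fiberwise-exact portion of $\big(\sum_j a_j\omega_j\big)\wedge\omega_{j_1}\wedge\dots\wedge\omega_{j_k}$ contributes nothing to the cohomology class, which is where the genuine de Rham argument lies; the rest is the formal algebra already packaged in Lemma~\ref{lem 2} and \Ref{LK}.
```
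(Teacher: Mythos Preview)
Your plan is correct and is essentially the paper's argument: Stokes for the fiberwise exact form $d(\Phi^{1/\ka}\psi)$ yields the singularity condition, and extracting the $dz_i\wedge dt_1\wedge\dots\wedge dt_k$-component of the key identity \Ref{form H} (with the $d_t$-exact pieces killed by Stokes on the fiber) yields the Gauss-Manin equations. The paper simply carries out explicitly the $z/t$-bookkeeping you flag as ``the main obstacle'': it writes the $dt$- and $dz_i$-parts of $\omega_{j_1}\wedge\dots\wedge\omega_{j_k}$ via rational functions $W_{j_1,\dots,j_k}$, $W_{j_1,\dots,j_k;i,l}$ with integer-polynomial numerators and records the resulting identities \Ref{id111} and \Ref{der zi}---an explicitness that is not merely cosmetic, since those integral-coefficient identities are exactly what is reduced modulo $p$ in Section~\ref{sec 4}.
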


The condition that the function $I^{(\ga)}$ takes values in $\Sing V_\C$ may be reformulated as the system of equations
\bean
\label{sieq}
{\sum}_{j\in J} a_j  I^{(\ga)}_{j, j_2,\dots,j_k} = 0,
\qquad  \on{for}\
j_2,\dots,j_k\in J.
\eean

\subsection{Proof of Theorem \ref{thm GM}}

We sketch the proof following \cite{SV1}, cf. \cite{SV2}.
 The intermediate statements of this sketch will be used later when 
constructing solutions of the Gauss-Manin differential  equations over a finite field
$\FFF_p$.
 The proof  of Theorem \ref{thm GM} is based on the following cohomological relations
\Ref{id111} and \Ref{der zi}.

For any $j_1,\dots,j_k\in J$ denote
\bean
\label{detd}
d_{j_1,\dots,j_k} = {\det}_{i,l=1}^k(b^i_{j_l}),
\qquad
W_{j_1,\dots,j_k}(z,t) = \frac{d_{j_1,\dots,j_k}}{\prod_{l=1}^kf_{j_l}(z,t)}.
\eean
We have
\bean
\label{tt-part}
\om_{j_1}\wedge\dots\wedge\om_{j_k} = W_{j_1,\dots,j_k}(z,t)\, dt_1\wedge\dots\wedge dt_k + \dots,
\eean
where the dots denote the terms having differentials  $dz_{j_1}, \dots, dz_{j_k}$. Notice that the rational function  $ W_{j_1,\dots,j_k}(z,t)$ has the form
\bean
\label{rat0}
P_{j_1,\dots,j_k}(z,t)
{\prod}_{j\in J} f_j(z,t) ^{-1},
\eean
where $P_{j_1,\dots,j_k}(z,t)$  is polynomial with integer coefficients in variable $z_1,\dots,z_n, t_1,\dots,t_k$ 
and $b^i_j$, $j\in J$, $i=1,\dots,n$, see \Ref{bij}.
For any $j_2,\dots,j_k\in J$ we write
\bean
\label{ttt-part}
\om_{j_2}\wedge\dots\wedge\om_{j_k} = {\sum}_{l=1}^k W_{j_2,\dots,j_k; l}(z,t)\, dt_1\wedge\dots\wedge
\widehat{dt_l}\wedge\dots \wedge dt_k + \dots,
\eean
where the dots denote the terms having differentials  $dz_{j_2},\dots,dz_{j_k}$ and
$ W_{j_1,\dots,j_k;l}$ are rational functions in $z,t$ of the form
\bean
\label{rat}
P_{j_1,\dots,j_k;l}(z,t)
{\prod}_{j\in J} f_j(z,t) ^{-1},
\eean
where $P_{j_1,\dots,j_k;l}(z,t)$  are polynomials with integer coefficients in variable $z_1,\dots,z_n, t_1,\dots,t_k$ 
and $b^i_j$, $j\in J$, $i=1,\dots,n$, see \Ref{bij}.
The formula
\bean
\label{id1}
\nu(a) \wedge \om_{j_2} \wedge\dots\wedge \om_{j_k}
={\sum}_{j\in J} a_j \,\om_j
\wedge \om_{j_2} \wedge\dots\wedge \om_{j_k}
\eean
implies the identity
\bean
\label{id111}
&&
\ka \, d_t\Big( \Phi(z,t,a)^{1/\ka} {\sum}_{l=1}^k W_{j_2,\dots,j_k;l}(z,t)\, dt_1\wedge\dots\wedge
\widehat{dt_l}\wedge\dots dt_k  \Big)
\\
\notag
&&
\phantom{aaaaaa}
=  {\sum}_{j\in J} a_j 
\Phi(z,t,a)^{1/\ka} W_{j, j_2,\dots,j_k}(z,t)\,  dt_1\wedge\dots\wedge dt_k,
\eean
where $d_t$ denotes the differential with respect to the variables $t$.

Now we deduce a corollary of the key identity  \Ref{form H}. 
Choose $i\in J$. For any  independent $ \{j_1 < \dots < j_k\} \subset J$, we write
\bean
\label{dzi}
&&
\Phi(z,t,a)^{1/\ka} \om_{j_1} \wedge\dots\wedge \om_{j_k}
 = \Phi(z,t,a)^{1/\ka} W_{j_1,\dots,j_k}(z,t)\, dt_1\wedge\dots\wedge dt_k
\\
\notag
&&\phantom{aaa}
+
dz_i\wedge
\Big(\Phi(z,t,a)^{1/\ka}{\sum}_{l=1}^k W_{j_1,\dots,j_k;i,l}(z,t)\, dt_1\wedge\dots\wedge
\widehat{dt_l}\wedge\dots\wedge dt_k \Big)
+ \dots,
\eean
where the dots denote the terms which contain $dz_j$ with $j\ne i$, 
and the coefficients \\
$W_{j_1,\dots,j_k;i,l}(z,t)$ are rational functions in $z,t$ of the form
\bean
\label{rat2}
P_{j_1,\dots,j_k;i,l}(t,z)
{\prod}_{j\in J} f_j(z,t)^{-1},
\eean
where $P_{j_1,\dots,j_k;i,l}(z,t)$
 are polynomials with integer coefficients in variable $z_1,\dots,z_n, t_1,\dots,t_k$ 
and $b^i_j$, $j\in J$, $i=1,\dots,n$, see \Ref{bij}.

Formula  \Ref{form H} implies  that for any $i\in J$ we have
\bean
\label{der zi}
&&
\phantom{aa}
\ka{\sum}_{{\rm independent } \atop \{j_1 < \dots < j_k\} \subset J }
\Big(
 \frac{\der}{\der z_i}
 \Big(
\Phi(z,t,a)^{1/\ka}W_{j_1,\dots,j_k}(z,t)  \Big) dt_1\wedge\dots\wedge dt_k
\\
\notag
&&
\phantom{aa}
 +
d_t\Big(\Phi(z,t,a)^{1/\ka}{\sum}_{l=1}^n W_{j_1,\dots,j_k;i,l}(t,z)\, dt_1\wedge\dots\wedge
\widehat{dt_l}\wedge\dots\wedge dt_k\Big)
 \Big) F(H_{j_1},\dots,H_{j_k})
\\
\notag
&&
\phantom{aaa}
     =  K_i(z){\sum}_{{\rm independent } \atop \{j_1 < \dots < j_k\} \subset J }
 \Phi(z,t,a)^{1/\ka} W_{j_1,\dots,j_k}(z,t) \,dt_1\wedge\dots\wedge dt_k\,  F(H_{j_1},\dots,H_{j_k}), 
\eean
where $d_t$ denotes the differential with respect to the variables $t$.

Integrating both sides of equations  \Ref{id111} and \Ref{der zi} over $\ga(z)$ and using Stokes' theorem we obtain equations
\Ref{sieq} and \Ref{dif eqn} for the vector $I^{(\ga)}(z)$ in \Ref{def I}. Theorem \ref{thm GM} is proved.

\subsection{Remarks}

It is known from \cite{SV1} that for generic $\ka$ all $\Sing V_\C$-valued solutions 
of the Gauss-Manin equations \Ref{dif eqn} are given by formula \Ref{def I}. Hence we have the following statement.

\begin{thm} [\cite{V4}]

\label{thm fl}
The geometric Hamiltonians $H_i(z)$, $i\in J$ preserve $\Sing V_\C$ and commute on $\Sing V_\C$, namely,
$[H_i(z^0)\big\vert_{\Sing V_\C},H_j(z^0)\big\vert_{\Sing V_\C}] =0$ for all $i,j\in J$ and $z^0\in\C^n-\Delta$.

\end{thm}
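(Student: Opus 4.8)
The plan is to derive both assertions from two facts already in hand: that every solution $I^{(\ga)}$ of \Ref{dif eqn} takes values in $\Sing V_\C$ (Theorem \ref{thm GM}), and that for generic $\ka$ these solutions exhaust the $\Sing V_\C$-valued solutions, so that their values at a fixed point span $\Sing V_\C$ (the completeness result of \cite{SV1} recalled in the preceding remark). Throughout I write $K_i(z)$ for the geometric Hamiltonians \Ref{K_j}, which are the operators denoted $H_i(z)$ in the statement; the essential structural observation, exploited repeatedly, is that both the operators $K_i(z^0)$ and the subspace $\Sing V_\C$ are independent of $\ka$, whereas the solutions $I^{(\ga)}$ are not.

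First I would show that each $K_i(z^0)$ preserves $\Sing V_\C$. Fix a generic $\ka$ and a point $z^0\in\C^n-\Delta$, and take any flat section $\ga$. From \Ref{dif eqn} we have $K_i(z)\,I^{(\ga)}(z)=\ka\,\der_{z_i}I^{(\ga)}(z)$. By Theorem \ref{thm GM} the curve $z\mapsto I^{(\ga)}(z)$ lies in the fixed subspace $\Sing V_\C$, hence so does its derivative, and therefore $K_i(z^0)\,I^{(\ga)}(z^0)\in\Sing V_\C$. Since the vectors $I^{(\ga)}(z^0)$ span $\Sing V_\C$ as $\ga$ varies, it follows that $K_i(z^0)\bigl(\Sing V_\C\bigr)\subset\Sing V_\C$; as $K_i(z^0)$ does not depend on $\ka$, this holds for every $\ka$.

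Next I would use the compatibility of the system. Because the Gauss-Manin connection is flat, \Ref{dif eqn} is integrable, so each $I^{(\ga)}$ satisfies $\der_{z_i}\der_{z_j}I^{(\ga)}=\der_{z_j}\der_{z_i}I^{(\ga)}$. Differentiating \Ref{dif eqn} and substituting the equations back in converts this into
\begin{equation*}
\ka\,\bigl(\der_{z_i}K_j(z)-\der_{z_j}K_i(z)\bigr)\,I^{(\ga)}(z)=[\,K_i(z),K_j(z)\,]\,I^{(\ga)}(z).
\end{equation*}
Evaluating at $z^0$ and again invoking that the $I^{(\ga)}(z^0)$ span $\Sing V_\C$ gives the operator identity
\begin{equation*}
\ka\,\bigl(\der_{z_i}K_j-\der_{z_j}K_i\bigr)\big\vert_{\Sing V_\C}=[\,K_i,K_j\,]\big\vert_{\Sing V_\C},
\end{equation*}
valid for all generic $\ka$. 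The left-hand side is $\ka$ times a fixed operator (the $K_i$ and their $z$-derivatives carry no $\ka$), while the right-hand side is independent of $\ka$; an identity $\ka A=B$ holding for infinitely many $\ka$ forces $A=0$ and $B=0$. Hence $[\,K_i,K_j\,]\big\vert_{\Sing V_\C}=0$ for all $i,j\in J$ and all $z^0\in\C^n-\Delta$, with $\der_{z_i}K_j=\der_{z_j}K_i$ on $\Sing V_\C$ as a byproduct.

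The main obstacle is not in these algebraic manipulations but in the completeness input: the claim that the periods $I^{(\ga)}(z^0)$ span $\Sing V_\C$ for generic $\ka$ rests on the nondegeneracy of the period pairing and the matching of the rank of the flat bundle with $\dim\Sing V_\C$, both from \cite{SV1}. Once that is granted, the $\ka$-degree separation cleanly turns the $\ka$-family of flat connections into the two desired vanishing statements, so I would take care to cite the completeness result precisely and to note that the preservation and commutativity conclusions, being $\ka$-independent, then hold unconditionally.
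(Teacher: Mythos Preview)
Your argument is correct and is exactly the route the paper indicates: the paper does not give a detailed proof but simply records the completeness statement from \cite{SV1} (``for generic $\ka$ all $\Sing V_\C$-valued solutions \dots\ are given by formula \Ref{def I}'') and then asserts the theorem, citing \cite{V4}. You have faithfully unpacked that one-line deduction --- spanning of $\Sing V_\C$ by the periods, $\ka$-independence of $K_i$ and of $\Sing V_\C$, and the degree-in-$\ka$ separation that kills both the curvature term and the commutator --- so there is no substantive difference between your approach and the paper's.
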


\section{Reduction modulo $p$ of a family  of parallelly transported hyperplanes}
\label{sec 4}

\subsection{An arrangement in  $\C^n\times\C^k$ over $\Z$}
\label{AnaZ} 
Similarly to Section \ref{Ana} consider
 $\C^k$ with coordinates $t_1,\dots,t_k$,\
$\C^n$ with coordinates $z_1,\dots,z_n$, the projection
$\C^n\times\C^k \to \C^n$.  Fix $n$ nonzero linear functions on $\C^k$,
$g_j = b_j^1t_1+\dots + b_j^kt_k$, $j\in J$,
{\it with integer coefficients} $b_j^i\in \Z$.
Define $n$ linear functions on $\C^n\times\C^k$,
\bean
\label{Zf}
f_j = z_j+g_j = z_j + b_j^1t_1+\dots + b_j^kt_k
\eean
 where $ j\in J$.

Recall the matroid structure $\mc M_\C$ on $J$, the set  $\frak C_\C$
of all circuits in $\mc M_\C$,
 and the  linear functions $f_C= \sum_{i\in J} \la_C^iz_i$ labeled by $C\in \frak C_\C$, where the functions are
defined in Section \ref{Discr}.  Each of these functions is determined
up to multiplication by a nonzero constant. 

\begin{defn}
\label{defn}
We fix the coefficients $(\la_C^i)_{i\in J}$ to be integers such that the greatest common divisor of
$(\la_C^i)_{i\in J}$ equals 1.
\end{defn}

This is possible since all $b^i_j$ are integers. 
This choice of the coefficients defines the function $f_C$ uniquely up to multiplication
by $\pm1$.

Let $p$ be a prime integer and $\FFF_p$ the field with $p$ elements. Let $[\,]:\Z\to \FFF_p$
be the natural projection. Introduce the following linear functions in $z,t$ with coefficients in $\FFF_p$,
\bean
\label{prg}
[g]_j: 
&=& 
{\sum}_{i=1}^k[b^i_j]t_i,
\qquad
[f]_j: =z_j+[g]_j,
\qquad
j\in J,
\\
\notag
{}[f]_C :
&=&
{\sum}_{i\in J} [\la_C^i]z_i,
\qquad C\in \frak C_\C.
\eean
The collection $([g]_j)_{j\in J}$ induces a
matroid structure  $\mc M_{\FFF_p}$ on $J$.  A subset $C=\{i_1,\dots,i_r\}\subset J$ is
a {\it circuit} in $\mc M_{\FFF_p}$  if $([g]_i)_{i\in C}$ are linearly dependent over $\FFF_p$
but any
proper subset of $C$ gives linearly independent $[g]_i$'s.

\begin{defn}
\label{def p}
 We say that a prime integer $p$ is {\it good
with respect to the collection of linear functions} $(g_j)_{j\in J}$ if all linear functions
in \Ref{prg} are nonzero and the matroid structures $\mc M_\C$ and $\mc M_{\FFF_p}$ on $J$ are the same.
\end{defn}

In what follows we always assume that $p$ is good with respect to the collection of linear functions $(g_j)_{j\in J}$.

We have logarithmic differential forms
\bea
[\omega]_j = \frac {d[f]_j}{[f]_j}, \ j\in J,
\qquad
[\omega]_C = \frac {d[f]_C}{[f]_C}, \ C\in \frak C_{\FFF_p} = \frak C_\C
\eea
in variables $t_1,\dots,t_k,z_1,\dots,z_n$ with coefficients in $\FFF_p$.
For any circuit $C=\{i_1,\dots,i_r\}$, we have
$$
[\omega]_{i_1} \wedge \dots \wedge [\omega]_{i_{r}} =
[\omega]_C \wedge {\sum}_{l=1}^{r} (-1)^{l-1}
[\omega]_{i_1} \wedge \dots \wedge \widehat{[\omega]_{i_{l}}} \wedge \dots \wedge
[\omega]_{i_{r}} .
$$

Assume that the nonzero integer weights $a=(a_j)_{j\in J}$ are given,
$a_j\in\Z$, $a_j\ne 0$.  The constructions of Section \ref{sec pt}
give us 
\begin{enumerate}
\item[(i)]
a vector space $V_{\FFF_p}$   over $\FFF_p$ with  standard basis 
$(F(H_{j_1},\dots,H_{j_k}))$ indexed by 
all independent subsets   $\{j_1 < \dots < j_k\}$ of $J$;

\item[(ii)]
a vector subspace $\Sing V_{\FFF_p} \subset V_{\FFF_p}$
consisting of all linear combinations
\\
$\sum _{{\rm independent } \atop \{j_1 < \dots < j_k\} \subset J }
I_{j_1,\dots,j_k} F(H_{j_1},\dots,H_{j_k})$ satisfying the  equations
\bean
\label{sieqp}
{\sum}_{j\in J} [a_j]  I_{j, j_2,\dots,j_k} = 0,
\qquad  \on{for}\
j_2,\dots,j_k\in J;
\eean

\item[(iii)]
a symmetric bilinear ${\FFF_p}$-valued contravariant form 
$[S]^{(a)}$ on $V_{\FFF_p}$ defined by the formulas
\bea
[S]^{(a)}(F(H_{j_1},\dots,H_{j_k}) , F(H_{j_1},\dots,H_{j_k})) = [a_{j_1}]\cdots [a_{j_k}]
\eea
for any independent $\{j_1<\dots<j_k\}$ and
$
[S]^{(a)}(F(H_{j_1},\dots,H_{j_\ell})$,
 $F(H_{i_1},\dots,H_{i_\ell})) = 0$
for any distinct elements of the standard basis.

\end{enumerate}

For any circuit $C=\{i_1, \dots, i_r\}$, we 
define a linear operator
$[L]_C : V_{\FFF_p}\to V_{\FFF_p}$ by the formula of
Section \ref{sec key identity} in which the numbers $a_{i_l}$
are replaced with $[a_{i_l}]$. 
We have the key identity
\bean
\label{fHp}
&&
{\sum}_{{\rm independent } \atop \{j_1 < \dots < j_k\} \subset J }
\Big( {\sum}_{j\in J} [a_j]
 [\omega]_j  \Big) \wedge [\omega]_{j_1} \wedge \dots \wedge [\omega]_{j_k}
\otimes
F(H_{j_1}, \dots , H_{j_k}) =
\\
&&
\phantom{aaaa}
{\sum}_{{\rm independent } \atop \{j_1 < \dots < j_k\} \subset J }
{\sum}_{C\in \frak C_{\FFF_p}}
[\omega]_{C} \wedge
 [\omega]_{j_1} \wedge \dots \wedge [\omega]_{j_k}
\otimes
[L]_C
F(H_{j_1}, \dots , H_{j_k}) .
\notag
\eean
For $i\in J$, we define the $\End_{\FFF_p}(V_{\FFF_p})$-valued rational functions in $z_1,\dots,z_n$
by the formula
\bean
\label{Kjp}
[K]_i(z) \ = \ {\sum}_{C\in \frak C_{\FFF_p}}
\,\frac{[\la^i_C]}{[f]_C(z)} \,[L]_C \,,
\qquad
i\in J .
\eean
We call the functions $[K]_i(z)$   the {\it geometric Hamiltonians}.
The geometric  Hamiltonians are symmetric with respect to the contravariant form,
$[S^{(a)}]([K]_i(z)x, y)=[S]^{(a)}(x, [K]_i(z)y)$ for $i\in J$, $x,y\in V_{\FFF_p}$.

The {\it Gauss-Manin differential equations over $\FFF_p$} with parameter $[\ka] \in \FFF_p^\times$ is the following system of differential equations,
\bean
\label{dif eqn p}
[\kappa] \frac{\der I}{\der z_i}(z) = [K]_i(z)I(z),
\qquad
i\in J.
\eean

The goal of this paper is to construct polynomial $\Sing V_{\FFF_p}$-valued solutions of these differential equations.

\subsection{Polynomial solutions}
Let a prime integer $p$ be good with respect to  $(g_j)_{j\in J}$.
Let $a=(a_j)_{j\in J}$ be nonzero integer weights
$a_j\in\Z$, $a_j\ne 0$. 

Choose  positive integers $A=(A_1,\dots,A_n)$, such that
\bean
\label{Mjp}
[A_j]  = \frac{[a_j]}{[\ka]} 
\eean
in $\FFF_p$.
Introduce the {\it master polynomial}
\bean
\label{Phip}
\Phi(z,t,A) = {\prod}_{j\in J} f_j(z,t)^{A_j} \ \ \in \ \ \Z[z_1,\dots,z_n, t_1,\dots,t_k],
\eean
where $f_j(z,t)$ are defined in \Ref{Zf}.   For any $j_1,\dots,j_k$ the function
$X_{j_1,\dots,j_k}(z,t,A): = 
\\
\Phi(z,t,A) \frac{d_{j_1,\dots,j_k}}{{\prod}_{l=1}^kf_{j_l}(z,t)} $ is a polynomial in $z,t$ with integer coefficients.
For fixed $q=(q_1,\dots,q_k)$  $ \in \Z^k$  consider the Taylor expansion
\bean
\label{Te}
X_{j_1,\dots,j_k}(z,t,A) = {\sum}_{i_1,\dots,i_k\geq 0} I_{j_1,\dots,j_k}^{ i_1,\dots,i_k}(z,q,A)\,
(t_1-q_1)^{i_1}\dots (t_k-q_k)^{i_k},
\eean
where $ I_{j_1,\dots,j_k}^{ i_1,\dots,i_k}(z,q,A)\in \Z[z_1,\dots,z_n]$ for any $i_1,\dots,i_k$. We denote
by $[ I]_{j_1,\dots,j_k}^{ i_1,\dots,i_k}(z,q,A)$ the projection of $ I_{j_1,\dots,j_k}^{ i_1,\dots,i_k}(z,q,A)$
to $\FFF_p[z_1,\dots,z_n]$.  Denote
\bean
\label{psp}
&&
[I]^{i_1,\dots,i_k}(z,q,A) 
\\
\notag
&&
\phantom{aaaa}
= {\sum} _{{\rm independent } \atop \{j_1 < \dots < j_k\} \subset J }
[I]_{j_1,\dots,j_k}^{i_1,\dots,i_k}(z,q,A) \,F(H_{j_1},\dots,H_{j_k}) \quad \in \quad
V_{\FFF_p}\ox \FFF_p[z_1,\dots,z_n].
\eean

\begin{thm}
\label{thm m}

Let  a prime integer $p$ be good with respect to  $(g_j)_{j\in J}$.
Then for any integers $A=(A_1,\dots,A_n)$ satisfying \Ref{Mjp}, any integers
 $q=(q_1,\dots,q_k)$ and any positive integers $\l=(l_1,\dots,l_k)$, the polynomial
function $I(z)=[I]^{l_1p-1,\dots,l_kp-1}(z,q,A)$
satisfies the algebraic equations in \Ref{sieqp} 
and the Gauss-Manin differential equations in \Ref{dif eqn p}.

\end{thm}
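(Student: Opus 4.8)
The plan is to transplant the proof of Theorem \ref{thm GM} to $\FFF_p$, making two substitutions. First, the multivalued master function $\Phi(z,t,a)^{1/\ka}$ is replaced by the master polynomial $\Phi(z,t,A)=\prod_{j\in J}f_j^{A_j}$ of \Ref{Phip}; this is legitimate because $d\log\Phi(z,t,a)^{1/\ka}=\tfrac1\ka\,\nu(a)$ while $d\log\Phi(z,t,A)=\nu(A)$, where $\nu(A)=\sum_{j\in J}A_j\om_j$, and by \Ref{Mjp} one has $[\ka]\,[A_j]=[a_j]$ in $\FFF_p$. A pleasant consequence is that the factor $\ka$, which appears throughout Section \ref{sec GM} because $d\Phi^{1/\ka}=\tfrac1\ka\Phi^{1/\ka}\nu(a)$, disappears from the polynomial identities, since $d\Phi(z,t,A)=\Phi(z,t,A)\,\nu(A)$ carries no $1/\ka$; the scalar $[\ka]$ will reenter only at the very last step. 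Because every $A_j\ge1$, each quantity $\Phi(z,t,A)\,W_{\dots}(z,t)$ that was divided by $\Phi^{1/\ka}$ in Section \ref{sec GM} is now a genuine polynomial with integer coefficients --- this is the polynomiality of $X_{j_1,\dots,j_k}$ recorded before \Ref{Te} --- so every manipulation takes place in $\Z[z,t]$ (with $f_C(z)^{-1}$ adjoined on the Hamiltonian side).

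The second substitution replaces ``integrate over the cycle $\ga(z)$ and apply Stokes' theorem'' by the algebraic operation ``extract the coefficient of $(t_1-q_1)^{l_1p-1}\cdots(t_k-q_k)^{l_kp-1}$ and reduce modulo $p$.'' Its effectiveness rests on the elementary $p$-analog of Stokes' theorem: for any polynomial $P\in\FFF_p[t_1,\dots,t_k]$, any $q\in\Z^k$ and any index $s$, the coefficient of $(t_1-q_1)^{l_1p-1}\cdots(t_k-q_k)^{l_kp-1}$ in $\der P/\der t_s$ vanishes, because differentiating in $t_s$ turns the monomial $(t_s-q_s)^{l_sp}$ into $[\,l_sp\,](t_s-q_s)^{l_sp-1}$ and $[\,l_sp\,]=0$. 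Hence extraction of this coefficient annihilates every $\der/\der t_l$-exact polynomial, exactly as integration over a closed cycle killed $d_t$-exact forms.

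I would then run the two halves of the proof of Theorem \ref{thm GM} in parallel. The $\Z$-analog of \Ref{id111}, obtained from $\Phi(z,t,A)\,\nu(A)\wedge\om_{j_2}\wedge\dots\wedge\om_{j_k}=d_t\big(\Phi(z,t,A)\,\om_{j_2}\wedge\dots\wedge\om_{j_k}\big)$, reads $\sum_{l}(-1)^{l-1}\der_{t_l}\big(\Phi(z,t,A)\,W_{j_2,\dots,j_k;l}\big)=\sum_{j\in J}A_j\,X_{j,j_2,\dots,j_k}$; extracting the $(l_1p-1,\dots,l_kp-1)$-coefficient kills the left side by the $p$-Stokes lemma, leaving $\sum_{j\in J}[A_j]\,[I]^{l_1p-1,\dots,l_kp-1}_{j,j_2,\dots,j_k}=0$, and multiplication by $[\ka]$ gives the singular equations \Ref{sieqp}, i.e. $I(z)\in\Sing V_{\FFF_p}$. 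Likewise, reading off the coefficient of $dz_i\wedge dt_1\wedge\dots\wedge dt_k$ in the $A$-weighted key identity of Lemma \ref{lem 2} (multiplied by $\Phi(z,t,A)$) produces the $\ka$-free $\Z$-analog of \Ref{der zi}, namely $\sum_{\text{indep}}\der_{z_i}(X_{j_1,\dots,j_k})\,F(H_{j_1},\dots,H_{j_k})$ equals $K_i^A(z)\sum_{\text{indep}}X_{j_1,\dots,j_k}\,F(H_{j_1},\dots,H_{j_k})$ plus $\der/\der t_l$-exact polynomials, where $K_i^A=\sum_{C}\tfrac{\la_C^i}{f_C}L_C^A$ is the Hamiltonian \Ref{Kjp} built from the integer weights $A_j$. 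Since $\der_{z_i}$ and the $t$-independent operators $L_C^A$ commute with extraction of a $t$-coefficient, and the exact terms again drop out, reduction modulo $p$ yields $\der_{z_i}I=[K_i^A](z)\,I$; multiplying by $[\ka]$ and using $[\ka]\,[L_C^A]=[L]_C$ (linearity of $L_C$ in the weights) turns this into the Gauss--Manin equations \Ref{dif eqn p}.

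The step I expect to be the main obstacle is the passage in the first paragraph: checking carefully that the cohomological relations \Ref{id111} and \Ref{der zi}, originally stated over $\C$ with the transcendental $\Phi^{1/\ka}$, descend to bona fide identities in $\Z[z,t]$ (respectively $\Z[z,t][f_C^{-1}]$) after $a_j/\ka\mapsto A_j$, with all integer normalizations and the single $[\ka]$-factor tracked so that the two reductions land on precisely \Ref{sieqp} and \Ref{dif eqn p}. By comparison, the $p$-Stokes lemma and the commutation of $\der_{z_i}$ and $[K]_i(z)$ with coefficient extraction are routine.
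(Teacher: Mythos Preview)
Your proposal is correct and follows essentially the same route as the paper's own proof: reduce the cohomological identities \Ref{id111} and \Ref{der zi} to polynomial identities by the substitution $\Phi^{1/\ka}\rightsquigarrow\Phi(z,t,A)$, extract the $(t-q)^{l_1p-1,\dots,l_kp-1}$ Taylor coefficient, and invoke the $p$-Stokes observation $\der_{t_s}(t_s-q_s)^{l_sp}=l_sp\,(t_s-q_s)^{l_sp-1}\equiv 0$ to kill the $d_t$-exact terms. The paper compresses all of this into three sentences; your explicit handling of the $\ka$-bookkeeping (removing it via $a_j/\ka\mapsto A_j$, then reinserting it through $[\ka][A_j]=[a_j]$ and the linearity of $L_C$ in the weights) is exactly what the paper's terse phrase ``project the Taylor expansions to $V_{\FFF_p}\otimes\FFF_p[z_1,\dots,z_n]$'' is hiding, and the obstacle you flag --- that \Ref{id111}, \Ref{der zi} are genuine integral identities after the substitution --- is what the paper disposes of by its pointer to \Ref{rat0}, \Ref{rat}, \Ref{rat2}.
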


The parameters $A$, $q$, $l_1p-1,\dots,l_kp-1$ of the
solution $I(z)$ are analogs of locally constant cycles $\ga(z)$ in Section \ref{sec GM}.

 Notice that the space of polynomial solutions
of the equations \Ref{sieqp} 
and  \Ref{dif eqn p}  is a module over the ring 
$\FFF_p[z_1^p,\dots,z_n^p]$ since $\frac{\der z_i^p}{\der z_j} =0$.

\begin{proof}

To prove that $I(z)$  satisfies \Ref{sieqp} and
\Ref{dif eqn p} consider the Taylor expansions at $t=q$ of both sides of equations
\Ref{id111} and 
\Ref{der zi} divided by $dt_1\wedge\dots\wedge dt_k$.
Notice that the Taylor expansions are well defined due to formulas \Ref{rat0}, \Ref{rat}, and \Ref{rat2}. 
We project the Taylor expansions to $V_{\FFF_p}\ox \FFF_p[z_1,\dots,z_n]$.
 Then  the terms coming from
the $d_t$-summands  equal 
zero since $d(t_i^{l_ip})/dt_i=l_ipt_i^{l_ip-1}\equiv 0$ (mod $p$).
\end{proof} 

\subsection{Relation of solutions to integrals over $\FFF_p^k$}

For a polynomial $F(t_1,\dots,t_k)$ $\in\FFF_p[t_1$, \dots, $t_k]$ and a subset $\ga\subset \FFF^k_p$
define the integral 
\bea
\int_{\ga} F(t_1,\dots,t_k)\, dt_1\wedge\dots\wedge dt_k := {\sum}_{(t_1,\dots, t_k)\in\ga}F(t_1,\dots,t_k).
\eea

Consider the vector of polynomials 
\bean
\label{Fv}
&&
\phantom{aaaa}
F(x,t,A)
\\
\notag
&&
\phantom{aaa}
 = \!\!\sum _{{\rm independent } \atop \{j_1 < \dots < j_k\} \subset J }\!\!
{\prod}_{j\in J} \big([f]_j(x,t)\big)^{A_j}  \frac{[d_{j_1,\dots,j_k}]}{{\prod}_{l=1}^k[f]_{j_l}(x,t)}
 \,F(H_{j_1},\dots,H_{j_k})  \in 
V_{\FFF_p}\ox \FFF_p[t_1,\dots,t_k].
\eean

\begin{thm}
\label{thm 2int}  Let $x=(x_1,\dots,x_n)\in \FFF_p$.  Let
$[I]^{p-1,\dots,p-1}(z,q,A)$ be the solution of equations 
\Ref{sieqp} and \Ref{dif eqn p} considered in Theorem \ref{thm m} for $(l_1,\dots,l_k)=(1,\dots,1)$.

 \begin{enumerate}
\item[(i)]
If $\deg_{t_i} F(x,t,A) < 2p-2$ for $i=1,\dots,k$.  Then 
\bean
\label{2INT}
[I]^{(p-1,\dots,p-1)}(x,q,A) =(-1)^k\int_{\FFF_p^k}F(x,t,A)\,dt_1\wedge\dots\wedge dt_k .
\eean
\item[(ii)]
If the integers $A=(A_j)_{j\in J}$ are such that
\bean
\label{ineq}
A_1+\dots+A_n-k < (k+1)(p-1),
\eean
then \Ref{2INT} holds.
\end{enumerate}

\end{thm}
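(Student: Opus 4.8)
The plan is to reduce both statements to the elementary finite-field summation identity
\[
\sum_{t\in\FFF_p} t^m =
\begin{cases}
-1,& m>0 \text{ and } (p-1)\mid m,\\
0,& \text{otherwise,}
\end{cases}
\]
valid for $m\ge 0$: the first case holds because $\FFF_p^\times$ is cyclic of order $p-1$, so $t^{m}=1$ for $t\neq0$; the vanishing cases follow since $\sum_{t\in\FFF_p^\times}t^m=0$ when $(p-1)\nmid m$, and $\sum_{t\in\FFF_p}1=p\equiv0$. Because the integral of a vector-valued polynomial is taken componentwise in the basis $F(H_{j_1},\dots,H_{j_k})$, it suffices to argue for each component of $F(x,t,A)$, namely the reduction $[X]_{j_1,\dots,j_k}(x,t,A)$ of $X_{j_1,\dots,j_k}$. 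Since all $A_j\ge1$, cancelling the factors $[f]_{j_l}$ against $\prod_j[f]_j^{A_j}$ shows this component is a genuine polynomial in $t$, equal to a product of $A_1+\dots+A_n-k$ affine-linear forms; in particular its total $t$-degree is at most $A_1+\dots+A_n-k$, and its degree in each $t_i$ is at most $\deg_{t_i}F(x,t,A)$.

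The bridge from the integral to the Taylor coefficient $[I]^{p-1,\dots,p-1}$ is translation invariance of the sum over the whole field. First I would substitute $t\mapsto t+q$, a bijection of $\FFF_p^k$, to obtain $\int_{\FFF_p^k}F(x,t,A)\,dt=\sum_{t\in\FFF_p^k}F(x,t+q,A)$. By the Taylor expansion \Ref{Te}, the expansion of $[X]_{j_1,\dots,j_k}(x,t,A)$ about $t=q$ has coefficients $[I]_{j_1,\dots,j_k}^{i_1,\dots,i_k}(x,q,A)$, so after the shift
\[
F(x,t+q,A)=\sum_{i_1,\dots,i_k\ge0}[I]^{i_1,\dots,i_k}(x,q,A)\,t_1^{i_1}\cdots t_k^{i_k};
\]
that is, the Taylor coefficient at $t=q$ becomes the ordinary monomial coefficient of the shifted polynomial. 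Summing termwise and applying the summation identity in each variable gives
\[
\int_{\FFF_p^k}F(x,t,A)\,dt=(-1)^k\!\!\sum_{\substack{m_1,\dots,m_k>0\\(p-1)\mid m_i}}\!\![I]^{m_1,\dots,m_k}(x,q,A),
\]
so the whole problem is reduced to showing that under either hypothesis the only surviving multi-index is $(m_1,\dots,m_k)=(p-1,\dots,p-1)$; this then yields $(-1)^k\int F=(-1)^{2k}[I]^{p-1,\dots,p-1}=[I]^{p-1,\dots,p-1}$, which is \Ref{2INT}.

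For part (i) this collapse is immediate: the per-variable degree is unchanged by the translation $t\mapsto t+q$, so any surviving $m_i$ obeys $0<m_i\le\deg_{t_i}F(x,t,A)<2p-2=2(p-1)$ together with $(p-1)\mid m_i$, forcing $m_i=p-1$ for every $i$. For part (ii) the control is by total degree. The total $t$-degree of $F(x,t,A)$, and hence of its shift, is at most $A_1+\dots+A_n-k$. Writing each surviving exponent as $m_i=a_i(p-1)$ with $a_i\ge1$, the surviving monomials satisfy $\sum_i a_i(p-1)=\sum_i m_i\le A_1+\dots+A_n-k<(k+1)(p-1)$ by \Ref{ineq}, whence $\sum_i a_i<k+1$; combined with $a_i\ge1$ this forces $\sum_i a_i=k$ and therefore $a_i=1$ for all $i$. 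Again only $(p-1,\dots,p-1)$ survives, and \Ref{2INT} follows.

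I expect the point requiring the most care to be the degree bookkeeping in part (ii): one must bound the relevant $t$-degree by $A_1+\dots+A_n-k$ using the factorization of $X_{j_1,\dots,j_k}$ into affine-linear forms (rather than the larger naive numerator degree of $\Phi$), and observe that this bound is preserved under $t\mapsto t+q$, so that the single inequality \Ref{ineq} is exactly what is needed to collapse the multi-index sum to one term. By contrast, the finite-field summation identity and the translation-invariance step are routine; the substance of the proof is this collapsing argument.
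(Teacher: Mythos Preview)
Your argument is correct and follows essentially the same route as the paper: both rest on the summation identity $\sum_{t\in\FFF_p}t^m=-1$ if $(p-1)\mid m>0$ and $0$ otherwise, then collapse the surviving multi-indices to $(p-1,\dots,p-1)$ via the per-variable degree bound in (i) and the total-degree bound $A_1+\dots+A_n-k<(k+1)(p-1)$ in (ii), using that each component of $F$ is a product of $A_1+\dots+A_n-k$ affine forms. Your treatment is in fact more careful than the paper's, which leaves implicit the translation $t\mapsto t+q$ converting Taylor coefficients at $q$ into monomial coefficients.
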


\begin{proof}
Part 1 follows from the statement: for a positive integer $i$,
\bean
\label{sum=0}
 \sum_{t\in\FFF_p}t^i \ \on{equals}\  -1 \ \on{if}\  (p-1)\big| i\
\on{ and\ equals\ zero\ otherwise}.
\eean
Part 2 also follows from \Ref{sum=0} by the following reason.
The polynomial 
\\
$P = \prod_{j\in J} \big([f]_j(x,t)\big)^{A_j}  \frac{[d_{j_1,\dots,j_k}]}{\prod_{l=1}^k[f]_{j_l}(x,t)}$ is a product
of $A_1+\dots+A_n-k$ linear functions in $t$. If \Ref{ineq} holds, then 
 $(t_1\dots t_k)^{p-1}$ is the only  monomial  $t_1^{m_1}\dots t_k^{m_k}$ of the
Taylor expansion of that polynomial such that $m_l>0$ and $(p-1)\big|m_l$ for $l=1,\dots,k$.
\end{proof}

The integral in \Ref{2INT} is a $p$-analog of the  hypergeometric integral \Ref{def I}, see also Section \ref{sec Ex}.

\section{Examples}
\label{sec Ex}

\subsection{Case $k=1$, see \cite{SV2}}

Let $\ka$, $a=(a_1,\dots,a_n)$ be nonzero complex numbers. Consider the master function
of complex variables
\bean
\label{Phi1}
\Phi(t_1,z_1,\dots,z_n,a)={\prod}_{i=1}^n(t_1+z_i)^{a_i}.
\eean
Let $z^0=(z^0_1,\dots,z_n^0)\in\C^n$ be a vector with distinct coordinates.
Consider the  $n$-vector  $I^{(\ga)} (z^0)=(I_1(z^0),\dots,I_n(z^0))$, where
\bean
\label{s}
I_j=\int _{\ga(z^0)} \Phi(t_1,z_1^0,\dots,z_n^0,a)^{1/\ka} \frac {dt_1}{t_1+z^0_j},\qquad j=1,\dots,n.
\eean
The integrals are over a closed (Pochhammer) curve $\ga(z^0)$ in $\C-\{z_1^0,\dots,z_n^0\}$ on which one fixes a uni-valued branch 
of the master function to make the integral well-defined.
Starting from such a curve chosen for given $\{z_1^0,\dots,z_n^n\}\subset\C$, the vector $I^{(\ga)}(z^0)$ can be analytically continued as a multivalued holomorphic function of $z$ to the complement in $\C^n$ to the union of the
diagonal  hyperplanes $z_i=z_j$.

\begin{thm}

 The vector $I^{(\ga)}(z)$ satisfies the algebraic equation
\bean
\label{le}
a_1I_1(z)+\dots+a_nI_n(z)=0
\eean
and  the differential equations:
\bean
\label{KZ}
\ka \frac{\partial I}{\partial z_i} \ = \
    \sum_{j \ne i}
   \frac{\Omega_{i,j}}{z_i - z_j}  I ,
\qquad i = 1, \dots , n,
\eean
where
\[ \Omega_{i,j} \ = \ \begin{pmatrix}

                   & \vdots^i &  & \vdots^j &  \\

        {\scriptstyle i} \cdots & { -a_j  } & \cdots &
              a_j    & \cdots \\

                   & \vdots &  & \vdots &   \\

        {\scriptstyle j} \cdots & a_i & \cdots & -a_i &
                 \cdots \\

                   & \vdots &  & \vdots &

                   \end{pmatrix} ,
                    \]
and all the
remaining  entries equal  zero, see \cite{SV1}, \cite[Section 1.1]{V2}.
\end{thm}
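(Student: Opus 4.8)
The plan is to deduce this statement from Theorem \ref{thm GM} by specializing the general construction to $k=1$ and computing the geometric Hamiltonians explicitly. Here $t=t_1$ and $f_i=t_1+z_i$, so in the notation of \Ref{bij} one has $g_i=t_1$ and $b_i^1=1$; the master function \Ref{Phi1} is the master function \Ref{Mast} of the general theory, the form $\om_i=df_i/f_i$ has $t$-part $dt_1/(t_1+z_i)$, and hence $I_j$ from \Ref{s} is the $j$-th hypergeometric integral and $I^{(\ga)}(z)=\sum_j I_j\,F(H_j)$.

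First I would describe the matroid and the functions $f_C$. Since $k=1$, any two of the functions $g_i=t_1$ are proportional, so the circuits are exactly the two-element subsets $C=\{i,j\}\subset J$. For such a circuit the relation $g_i=g_j$ forces $(\la_C^i,\la_C^j)=\pm(1,-1)$, whence $f_C=z_i-z_j$; these are precisely the diagonal hyperplanes $z_i=z_j$ occurring in \Ref{KZ}. Since the ratio $\la_C^i/f_C$ is insensitive to the overall sign of $\la_C$, assembling the geometric Hamiltonian \Ref{K_j} and combining the circuits with $j<i$ and $j>i$ gives $K_i(z)=\sum_{j\ne i}\frac1{z_i-z_j}\,L_{\{i,j\}}$.

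Next I would compute $L_{\{i,j\}}$ from its definition in Section \ref{sec key identity}. With $r=2$ and $k=1$ there are no auxiliary indices $s$, and $L_{\{i,j\}}$ kills every basis vector $F(H_l)$ with $l\notin\{i,j\}$; on the plane spanned by $F(H_i),F(H_j)$ the defining alternating sum (with its $(-1)^m$ and $(-1)^l a_{i_l}$ signs) gives $F(H_i)\mapsto a_jF(H_i)-a_iF(H_j)$ and $F(H_j)\mapsto -a_jF(H_i)+a_iF(H_j)$. Read as a matrix in the standard basis this is exactly the block $\Omega_{i,j}$ displayed in the statement, up to the overall sign fixed by the conventions of this paper; consequently $K_i(z)=\sum_{j\ne i}\Omega_{i,j}/(z_i-z_j)$, and the system \Ref{dif eqn} becomes \Ref{KZ}. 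Finally, since $k-1=0$, the list of free indices $j_2,\dots,j_k$ in the singular-vector equations \Ref{sieq} is empty, so those equations collapse to the single relation $\sum_j a_j I_j=0$, which is \Ref{le}.

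The only genuine work is the sign bookkeeping: checking that the $2\times2$ block produced by the definition of $L_C$ matches the matrix $\Omega_{i,j}$ under the sign conventions in force, and that the factors $\la_C^i/f_C$ recombine correctly over the circuits through a fixed index $i$. Everything else is a transcription of Theorem \ref{thm GM} and its singular-vector reformulation into the language of the $k=1$ case.
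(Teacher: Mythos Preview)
Your strategy is exactly the one implicit in the paper: the theorem is stated in Section~5.1 with references to \cite{SV1} and \cite{V2} and no independent proof, precisely because it is the $k=1$ specialization of Theorem~\ref{thm GM}. Your identification of the circuits as the two-element subsets, of $f_C=z_i-z_j$, and of the singular-vector condition \Ref{sieq} as the single relation $\sum_j a_j I_j=0$ is all correct.

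The one place where your write-up is not accurate is the sign bookkeeping you yourself flag. Your computation of $L_{\{i,j\}}$ is right, but the resulting matrix in the standard basis has $(i,i)$-entry $a_j$, $(i,j)$-entry $-a_j$, $(j,i)$-entry $-a_i$, $(j,j)$-entry $a_i$; this is $-\Omega_{i,j}$, not $\Omega_{i,j}$, so $K_i(z)=-\sum_{j\ne i}\Omega_{i,j}/(z_i-z_j)$. A direct check confirms it: for $j\ne i$,
\[
\kappa\,\frac{\partial I_j}{\partial z_i}
=\int_\gamma \Phi^{1/\kappa}\,\frac{a_i\,dt_1}{(t_1+z_i)(t_1+z_j)}
=\frac{a_1}{z_j-z_i}\,(I_i-I_j)
=\frac{1}{z_i-z_j}\,(a_iI_j-a_iI_i),
\]
whereas the $j$-th component of $\frac{\Omega_{i,j}}{z_i-z_j}I$ is $\frac{1}{z_i-z_j}(a_iI_i-a_iI_j)$. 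So the displayed $\Omega_{i,j}$ in the statement carries the opposite sign from what Theorem~\ref{thm GM} and the direct computation produce; this is a harmless discrepancy with the conventions of the cited sources and does not affect your argument, but you should state clearly that $L_{\{i,j\}}=-\Omega_{i,j}$ rather than claim the match is ``exact.''
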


\begin{exmp}
\label{ex1}
Let $\ka=2$,  $n=3$, $a_1=a_2=a_3=-1$. Then $I^{(\ga)}(z)=(I_1(z),I_2(z),I_3(z))$,  where
\bean
\label{I3}
I_j(z) = \int_{\ga(z)} \frac 1{\sqrt{(t+z_1)(t+z_2)(t+z_3)}}\frac {dt}{t+z_j}.
\eean
In this case, the curve $\ga(z)$ may be thought of as a closed path on the elliptic curve
\bea
y^2=(t+z_1)(t+z_2)(t+z_3).
\eea
Each of these integrals is an elliptic integral. Such an integral is
 a branch of analytic continuation of a suitable Euler hypergeometric function up to change of variables.

\end{exmp}

\begin{exmp}
\label{ex2}
Let  $p>3$
be a prime integer. 
Let  $\ka=2$, $a_1=\dots=a_n=-1$, cf. Example \ref{ex1}.   For such $\ka$ and $a_j$,
 the algebraic equations
\Ref{le} and the differential equations \Ref{KZ} are well-defined when reduced modulo $p$. 
Choose the master polynomial
\bean
\Phi(t_1,z_1,\dots,z_n) = {\prod}_{i=1}^n(t_1+z_i)^{\frac{p-1}2}.
\eean
Consider the Taylor expansion of the polynomial 
\bean
\label{te}
F(t_1,z)= {\prod}_{i=1}^n(t_1+z_i)^{\frac{p-1}2} \Big(\frac 1{t_1+z_1}, \dots, \frac 1{t_1+z_n}\Big)
= {\sum}_i  I^i(z) t_1^i,
\eean
 see \Ref{Fv}. Let $[I]^i(z)$ be the projection of $I^i(z)$ to $(\FFF_p[z])^n$.
Then the vector $I(z):=[I]^{p-1}(z)$
is a solution of the differential equations  \Ref{KZ} over $\FFF_p[z]$ 
and $I_1(z)+\dots+I_n(z)=0$, see Theorem \ref{thm m}.

If $n\leq 4$, and $x=(x_1,\dots,x_n)\in \FFF_p^n$, then
\bea
I(x) = \int_{\FFF_p} F(t_1, x)\, dt_1
\eea
by Theorem \ref{thm 2int}.

Let $x=(x_1,\dots,x_n)\in\FFF_p^k$. Let 
$\Ga(x)$ be
 the affine curve
\bean
\label{ec}
y^2 = (t_1+x_1)\dots(t_1+x_n)
\eean
over $\FFF_p$. For a rational function $h : \Ga(x) \to \FFF_p$  define the integral  
\bean
\label{int}
\int_{\Ga(x)} h = {\sum}_{P\in\Ga(x)}' h(P),
\eean
as the sum over all points $P\in\Ga(x)$, where $h(P)$  is defined.

\begin{thm}
\label{thm pts}

Let $n$ equal $3$ or $4$. 
Let $[I]^{p-1}(x)=( [I]^{p-1}_1(x)$, \dots,
$ [I]^{p-1}_n(x))$ 
be  the vector of polynomials  obtained from
\Ref{te}. Then 
\bean
\label{pts=sol} 
\int_{\Ga(x)}\frac 1{t_1+x_j}
& = &  - \, [I]^{p-1}_j(x),
  \qquad j=1,\dots,n.
\eean
\end{thm}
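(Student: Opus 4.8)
The plan is to compute both sides of \Ref{pts=sol} as explicit finite sums over $\FFF_p$ and check they agree. For the right-hand side I would invoke Theorem \ref{thm 2int}: with $k=1$, weights $A_i=(p-1)/2$ (so that $[A_i]=[a_i]/[\ka]=-1/2$), and $n\in\{3,4\}$, the bound \Ref{ineq} reads $n(p-1)/2-1<2(p-1)$, which holds exactly for $n=3,4$ and fails for $n\ge 5$; this is the source of the hypothesis on $n$. Hence part (ii) of Theorem \ref{thm 2int} applies and yields
\[
[I]^{p-1}_j(x)=-\sum_{\tau\in\FFF_p}F_j(\tau,x),\qquad
F_j(\tau,x)=(\tau+x_j)^{(p-3)/2}\prod_{i\neq j}(\tau+x_i)^{(p-1)/2},
\]
where $F_j$ is the $j$-th component of the vector in \Ref{te}; note $p>3$ forces $(p-3)/2\ge 1$, so $F_j$ is a genuine polynomial, and the sign $-1$ is exactly the $\sum_{\tau}\tau^{p-1}=-1$ of \Ref{sum=0}.

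Next I would evaluate the left-hand side by counting points of $\Ga(x)$ fibrewise over the $t_1$-line. Writing $g(\tau)=\prod_{i=1}^n(\tau+x_i)$, the number of $y\in\FFF_p$ with $y^2=g(\tau)$ equals $1+g(\tau)^{(p-1)/2}$, since the quadratic character satisfies $\chi(c)=c^{(p-1)/2}$ for every $c\in\FFF_p$ (with $\chi(0)=0$). Because $1/(t_1+x_j)$ does not depend on $y$, and is undefined only at the single curve point $(-x_j,0)$ lying over $\tau=-x_j$ (which the primed sum in \Ref{int} discards), I get
\[
\int_{\Ga(x)}\frac{1}{t_1+x_j}=\sum_{\tau\in\FFF_p,\ \tau\neq -x_j}\frac{1}{\tau+x_j}\bigl(1+g(\tau)^{(p-1)/2}\bigr).
\]

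I would then split this into two pieces. As $\tau$ runs over $\FFF_p\setminus\{-x_j\}$, the quantity $\tau+x_j$ runs over $\FFF_p^\times$, and since inversion permutes $\FFF_p^\times$ the first piece is $\sum_{u\in\FFF_p^\times}u=0$ for $p>2$. In the second piece, for $\tau\neq -x_j$ one has $g(\tau)^{(p-1)/2}/(\tau+x_j)=F_j(\tau,x)$, while the omitted value at $\tau=-x_j$ satisfies $F_j(-x_j,x)=0$ because $(p-3)/2\ge 1$; thus the second piece equals $\sum_{\tau\in\FFF_p}F_j(\tau,x)$. Combining the two displays gives $\int_{\Ga(x)}1/(t_1+x_j)=\sum_{\tau}F_j(\tau,x)=-[I]^{p-1}_j(x)$, which is \Ref{pts=sol}.

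The computation is almost entirely routine; the only point requiring care is the bookkeeping at the fibres where $g(\tau)=0$. The pole $\tau=-x_j$ of the integrand is precisely the point excluded by the primed sum, while at the other zeros $\tau=-x_i$ ($i\neq j$) the curve has a single point but $F_j$ vanishes, so the fibrewise count and the polynomial $F_j$ agree there automatically. The two genuine inputs are the character identity $\#\{y:y^2=g(\tau)\}=1+g(\tau)^{(p-1)/2}$ and the vanishing $\sum_{u\in\FFF_p^\times}u=0$; everything else is the dictionary between $\FFF_p$-point counts and the $p$-analog integral already encoded in Theorem \ref{thm 2int} and \Ref{sum=0}.
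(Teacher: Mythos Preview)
Your proof is correct and follows essentially the same route as the paper: both compute the curve integral fibrewise via the quadratic-character identity $\#\{y:y^2=g(\tau)\}=1+g(\tau)^{(p-1)/2}$, split off the ``$1$'' piece (which vanishes), and identify the remaining piece with $-[I]^{p-1}_j(x)$ using the degree bound for $n=3,4$ exactly as in Theorem~\ref{thm 2int}. The paper dispatches the ``$1$'' piece by rewriting $1/(t_1+x_j)$ as $(t_1+x_j)^{p-2}$ and invoking \Ref{sum=0}, whereas you use the equivalent observation $\sum_{u\in\FFF_p^\times}u=0$; your extra bookkeeping at the fibres over the zeros of $g$ is more explicit than the paper's but amounts to the same verification.
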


\begin{rem}
Theorems  \ref{thm m}  and \ref{thm pts}  say that the integrals 
$\int_{\Ga(x_1,\dots,x_n)}\frac 1{t_1+x_j}$
are polynomials in $x_1,\dots,x_n\in\FFF_p$ and the tuple of polynomials
\bea
 \left( \int_{\Ga(x_1,\dots,x_n)}\frac 1{t_1+x_1},\dots,  
\int_{\Ga(x_1,\dots,x_n)}\frac 1{t_1+x_n}\right)
\eea
in these discrete variables
satisfies the system of Gauss-Manin differential equations. Cf. Example \ref{ex1}.

\end{rem}

\begin{rem}
In \cite[Section 2]{Ma} and in \cite{Cl}, an equation analogous to
\Ref{pts=sol}  for $n=3$  is considered, where the left-hand side is the number of points
on $\Ga(x_1,x_2,x_3)$ over $\FFF_p$ and the right-hand side is the reduction modulo $p$ of a solution of a 
second order Gauss hypergeometric differential equation. Notice that the number of points on $\Ga(x_1,x_2,x_3)$
 is the discrete integral over $\Ga(x_1,x_2,x_3)$ of the 
constant function $h=1$.

\end{rem}

{\it Proof of Theorem \ref{thm pts}.}
The proof is analogous to the reasoning in \cite[Section 2]{Ma} and \cite{Cl}. 
It is easy to see that 
\bea
&&
\int_{\Ga(x_1,\dots,x_n)}\frac 1{t_1+x_j}
 = 
{\sum}_{t_1\in\FFF_p,\, t_1\ne x_j} \frac 1{t_1+x_j} 
+{\sum}_{t_1\in\FFF_p} \frac {1}
{t_1+x_j}{\prod}_{s=1}^n(t_1+x_s)^{\frac{p-1}2}
\\
&&
\phantom{aa}
=  {\sum}_{t_1\in\FFF_p} (t_1+x_j)^{p-2} 
 + {\sum}_{t_1\in\FFF_p}  \sum_i  [I]^i_j(x_1,\dots,x_n) t_1^i
=   -  [I]^{p-1}_j(x_1,\dots,x_n). 
\eea
Notice that $  \sum_{t_1\in\FFF_p}  \sum_i  [I]^i_j(x_1,\dots,x_n) t_1^i
=   -  [I]^{p-1}_j(x_1,\dots,x_n)$ since for $n=3,4$ the degree of the left-hand side is less than
$2p-2$, see Theorem \ref{thm 2int}.
\qed

\end{exmp}

See
more examples with $k=1$  in \cite[Section 1]{SV2}.

\subsection{Counting on two-folded covers}
\label{cpww}

As in Section \ref{AnaZ}  consider $n$ nonzero linear functions on $\C^k$,
$g_j = b_j^1t_1+ \dots+b_j^kt_k$, $j\in J$, with integer coefficients $b_j^i\in \Z$.
Let a prime integer $p>3$ be good  with respect to $(g_j)_{j\in J}$. 

Assume that all weights $(a_j)_{j\in J}$ are equal to $-1$ and $\ka = 2$. Under these assumptions consider
the algebraic equations \Ref{sieqp} and differential equations \Ref{dif eqn p}. To construct
a solution of these equations  choose a master polynomial
$\Phi(z,t) = \prod_{j\in J} f_j(z,t)^{\frac{p-1}2}$,
consider the Taylor expansion of the polynomial
\bean
\label{pol F}
F(z,t) = {\sum}_{{\rm independent } \atop \{j_1 < \dots < j_k\} \subset J }
 \Phi(z,t) \frac{d_{j_1,\dots,j_k}}
{f_{j_1}(z,t)\dots f_{j_k}(z,t)}\, F(H_{j_1},\dots,H_{j_k})
\eean
at $t=0$, and obtain the solution
\bean
\label{[i]}
[I]^{p-1,\dots,p-1}(z)=
{\sum}_{{\rm independent } \atop \{j_1 < \dots < j_k\} \subset J }
[I]^{p-1,\dots,p-1}_{j_1,\dots,j_k}
(z)\, F(H_{j_1},\dots,H_{j_k})
\eean
 of the algebraic equations \Ref{sieqp} and  differential equations \Ref{dif eqn p}
 by taking the coefficient
of $(t_1\dots t_k)^{p-1}$ of the Taylor expansion, see Theorem \ref{thm m}.

Let $x=(x_1,\dots,x_n)\in\FFF_p^n$. Let 
$\Ga(x)$ be
 the affine hypersurface
\bean
\label{ec}
y^2 = {\prod}_{j\in J} [f]_j(x,t)
\eean
over $\FFF_p$.  Recall that $[f]_j(x,t) = [b^1_j]t_1+\dots +[b^k_j]t_k + x_j$,
 where $[\,]: \Z\to \FFF_p$ is the natural projection.

 For a rational function $h : \Ga(x)\to \FFF_p$  define the integral  
\bean
\label{int}
\int_{\Ga(x)} h = {\sum}_{P\in\Ga(x)}' h(P),
\eean
as the sum over all points $P\in\Ga(x)$ with well-defined $h(P)$.

\begin{thm}
\label{thm pts p}

Let 
\bean
\label{INeq}
n\frac{p-1}2-k<(k+1)(p-1).
\eean 
Let $[I]^{p-1,\dots,p-1}(x)$ 
be  the vector  in
\Ref{[i]} at $z=x$. Then for any independent $\{j_1<\dots<j_k\}\subset J$ we have
\bean
\label{pts=sol1} 
\int_{\Ga(x)} \frac{[d_{j_1,\dots,j_k}]}
{{\prod}_{l=1}^k[f]_{j_l}(x,t)}
& = &  (-1)^k [I]^{p-1,\dots,p-1}_{j_1,\dots, j_k}(x).
\eean
\end{thm}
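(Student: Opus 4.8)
The plan is to mimic the computation proving Theorem \ref{thm pts} (the case $k=1$), in the spirit of \cite{Ma} and \cite{Cl}, but now in $k$ variables. First I would describe the points of $\Ga(x)$: a point is a pair $(t,y)=(t_1,\dots,t_k,y)$ with $y^2=\prod_{j\in J}[f]_j(x,t)$, and for each fixed $t\in\FFF_p^k$ the number of admissible $y$ equals $1+\big(\prod_{j\in J}[f]_j(x,t)\big)^{(p-1)/2}$, with the convention $0^{(p-1)/2}=0$; this is the usual count of square roots in $\FFF_p$ via the quadratic character. The function $h=[d_{j_1,\dots,j_k}]/\prod_{l=1}^k[f]_{j_l}(x,t)$ is defined at $(t,y)$ exactly when $\prod_{l=1}^k[f]_{j_l}(x,t)\ne0$ (note that at any excluded $t$ one has $\prod_{j\in J}[f]_j(x,t)=0$, so the only point over such $t$ is $(t,0)$, where $h$ is undefined, and no contribution is improperly lost). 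Summing $h$ over all points of $\Ga(x)$ at which it is defined therefore gives $\int_{\Ga(x)}h=S_1+S_2$, where $S_1$ collects the contribution of the constant $1$ and $S_2$ that of the character term, both sums taken over $t\in\FFF_p^k$ with $\prod_l[f]_{j_l}(x,t)\ne0$.

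Next I would show $S_1=0$. Because $\{j_1<\dots<j_k\}$ is independent and $p$ is good, the linear parts $([g]_{j_l})_{l=1}^k$ are linearly independent over $\FFF_p$, so the $k\times k$ matrix $([b^i_{j_l}])$ has nonzero determinant $[d_{j_1,\dots,j_k}]$ and the map $t\mapsto u=(u_1,\dots,u_k)$, $u_l=[f]_{j_l}(x,t)$, is an affine bijection of $\FFF_p^k$. Reindexing by $u$ yields $S_1=[d_{j_1,\dots,j_k}]\sum_{u\in(\FFF_p^\times)^k}\prod_{l=1}^k u_l^{-1}$, which factors as $[d_{j_1,\dots,j_k}]\prod_{l=1}^k\big(\sum_{u_l\ne0}u_l^{p-2}\big)$; each inner sum equals $\sum_{u_l\in\FFF_p}u_l^{p-2}$ (since $0^{p-2}=0$) and vanishes by \Ref{sum=0}, because $p-2$ is positive and not divisible by $p-1$. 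Hence $S_1=0$.

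For $S_2$ I would identify $\big(\prod_{j\in J}[f]_j(x,t)\big)^{(p-1)/2}$ with $[\Phi(x,t)]$ for the master polynomial $\Phi=\prod_{j\in J}f_j^{(p-1)/2}$ of \Ref{pol F}, so that the summand is $[\Phi(x,t)]\,[d_{j_1,\dots,j_k}]/\prod_l[f]_{j_l}(x,t)$, which is precisely the polynomial $X_{j_1,\dots,j_k}(x,t,A)$ of \Ref{Te} with $A_j=(p-1)/2$. Since $p>3$ forces the exponent $(p-1)/2-1\ge1$ on each cancelled factor, this polynomial vanishes whenever some $[f]_{j_l}(x,t)=0$, so the restricted sum may be extended to all of $\FFF_p^k$. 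By \Ref{sum=0} the sum over $\FFF_p^k$ of a monomial $t_1^{i_1}\cdots t_k^{i_k}$ equals $(-1)^k$ when every $i_l$ is a positive multiple of $p-1$ and $0$ otherwise. The hypothesis \Ref{INeq} bounds $\deg_t X_{j_1,\dots,j_k}\le n\tfrac{p-1}2-k<(k+1)(p-1)$, which excludes any $i_l\ge2(p-1)$ and leaves $(t_1\cdots t_k)^{p-1}$ as the only surviving monomial; its coefficient is $[I]^{p-1,\dots,p-1}_{j_1,\dots,j_k}(x)$ by \Ref{Te}--\Ref{[i]}. Thus $S_2=(-1)^k[I]^{p-1,\dots,p-1}_{j_1,\dots,j_k}(x)$, and combining with $S_1=0$ yields \Ref{pts=sol1}.

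I expect the main obstacle to be the bookkeeping in $S_2$ that isolates the single monomial $(t_1\cdots t_k)^{p-1}$: this requires the per-variable divisibility condition of \Ref{sum=0} together with the global degree inequality \Ref{INeq} to rule out monomials in which some exponent is a larger multiple of $p-1$. The other delicate point is justifying the extension of the restricted sum to the full cube, which is legitimate only because $p>3$ makes $X_{j_1,\dots,j_k}$ vanish on the deleted hyperplanes.
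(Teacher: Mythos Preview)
Your proof is correct and follows essentially the same route as the paper's: the paper also decomposes the integral over $\Ga(x)$ into the sum $S_1+S_2$ (its equation \Ref{sum}), kills $S_1$ by the affine change of coordinates $t\mapsto([f]_{j_1},\dots,[f]_{j_k})$ exactly as you do (Lemma \ref{lem =0}), and identifies $S_2$ with $(-1)^k[I]^{p-1,\dots,p-1}_{j_1,\dots,j_k}(x)$ by invoking Theorem \ref{thm 2int}(ii), whose proof is precisely your degree-count argument isolating the monomial $(t_1\cdots t_k)^{p-1}$. Your write-up is slightly more explicit in two places --- you spell out the point count $1+\big(\prod_j[f]_j(x,t)\big)^{(p-1)/2}$ that the paper hides in ``it is easy to see'', and you justify carefully why the restricted sum $S_2$ may be extended to all of $\FFF_p^k$ (using $p>3$) --- but otherwise the arguments coincide.
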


This theorem is a generalization of Theorem \ref{thm pts}.
Theorem \ref{thm pts p} says that the integrals in the left-hand side of \Ref{pts=sol} are polynomials
in $x_1,\dots,x_n\in\FFF_p$ and satisfy 
the algebraic equations \Ref{sieqp} and  differential equations \Ref{dif eqn p}. 

\begin{proof}
It is easy to see that 
\bean
\label{sum}
&&
\int_{\Ga(x)}\frac{[d_{j_1,\dots,j_k}]}
{\prod_{l=1}^k[f]_{j_l}(x,t)}
\\
&&
\phantom{aaaa}
\notag
 = 
{\sum}_{t\in\FFF_p^k, \atop
\prod_{l=1}^k[f]_{j_l}(x,t)\ne 0} 
\frac{[d_{j_1,\dots, j_k}]}{\prod_{l=1}^k[f]_{j_l}(x,t)}
+{\sum}_{t\in\FFF_p^k} \frac{[d_{j_1,\dots,j_k}]}{\prod_{l=1}^k[f]_{j_l}(x,t)}
\Big({\prod}_{j\in J} [f]_j(x,t)\Big)^{\frac{p-1}2},
\eean
cf. \cite{Ma} and \cite{Cl}.

\begin{lem}
\label{lem =0} The first sum in the right-hand side of \Ref{sum} equals zero.

\end{lem}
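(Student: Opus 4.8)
The plan is to show that the first sum vanishes by an affine change of the summation variable that turns it into a product of one-dimensional power sums, each of which is zero by \Ref{sum=0}. First I would pull the nonzero constant $[d_{j_1,\dots,j_k}]$ out of the sum, so that it remains to prove
\[
\sum_{t\in\FFF_p^k,\ \prod_{l=1}^k[f]_{j_l}(x,t)\ne 0}\ \prod_{l=1}^k\frac1{[f]_{j_l}(x,t)}=0 .
\]
The crucial structural input is that $\{j_1,\dots,j_k\}$ is independent in $\mc M_{\FFF_p}$, so the $k$ linear forms $[g]_{j_1},\dots,[g]_{j_k}$ in the $k$ variables $t_1,\dots,t_k$ are linearly independent over $\FFF_p$; equivalently the square matrix $([b^i_{j_l}])_{i,l=1}^k$ has determinant $[d_{j_1,\dots,j_k}]\ne 0$ in $\FFF_p$. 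Here I use that $p$ is good, so independence over $\C$ persists modulo $p$, and that the number of forms matches the number of $t$-variables, making the relevant matrix genuinely square and invertible.

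Hence the affine map $t\mapsto u=(u_1,\dots,u_k)$ with $u_l:=[f]_{j_l}(x,t)=x_{j_l}+\sum_i[b^i_{j_l}]t_i$ is a bijection of $\FFF_p^k$ onto itself. Under this substitution the constraint $\prod_{l=1}^k[f]_{j_l}(x,t)\ne 0$ becomes precisely $u\in(\FFF_p^\times)^k$, and the summand factors, so the displayed sum equals
\[
\sum_{u\in(\FFF_p^\times)^k}\ \prod_{l=1}^k\frac1{u_l}=\prod_{l=1}^k\Big(\sum_{u\in\FFF_p^\times}u^{-1}\Big).
\]
Since $u\mapsto u^{-1}$ permutes $\FFF_p^\times$, each factor equals $\sum_{u\in\FFF_p^\times}u=0$ for odd $p$; alternatively $u^{-1}=u^{p-2}$ with $0<p-2<p-1$, so the factor vanishes by \Ref{sum=0}. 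As the product contains $k\ge1$ such factors, it is zero, which is the claim.

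I expect the only delicate point to be the justification that the coordinate change is a bijection, namely verifying $[d_{j_1,\dots,j_k}]\ne 0$ and that the deleted locus $\{\prod_l[f]_{j_l}=0\}$ maps exactly onto the complement of $(\FFF_p^\times)^k$; everything else is the standard Fermat-type evaluation of a geometric power sum, exactly as in the $k=1$ computation in the proof of Theorem \ref{thm pts}.
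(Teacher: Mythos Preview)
Your proof is correct and essentially identical to the paper's: both use that $p$ good and $\{j_1,\dots,j_k\}$ independent give $[d_{j_1,\dots,j_k}]\ne 0$, change to the affine coordinates $s_l=[f]_{j_l}(x,t)$, and then evaluate via the power-sum identity \Ref{sum=0} with $s_l^{-1}=s_l^{p-2}$. The only cosmetic difference is that you factor the sum as $\prod_{l=1}^k\sum_{u\in\FFF_p^\times}u^{-1}$ before killing each factor, whereas the paper writes it in one step as $\sum_{s\in\FFF_p^k}[d_{j_1,\dots,j_k}](s_1\cdots s_k)^{p-2}=0$.
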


\begin{proof}
Since $p$ is good and $\{j_1,\dots,j_k\}$ is independent, we have
$[d_{j_1,\dots, j_k}]\ne 0$. Hence we may choose $s_l:= [f]_{j_l}(x,t)$,
$l=1,\dots,k$, to be affine coordinates on $\FFF_p^k$. Then
\bea
{\sum}_{t\in\FFF_p^k, \atop
\prod_{l=1}^k[f]_{j_l}(x,t)\ne 0} 
\frac{[d_{j_1,\dots, j_k}]}{\prod_{l=1}^k[f]_{j_l}(x,t)}
= {\sum}_{s\in\FFF_p^k, \atop
s_1\dots s_k\ne 0} 
\frac{[d_{j_1,\dots, j_k}]}{s_1\dots s_k}
={\sum}_{s\in\FFF_p^k}
[d_{j_1,\dots, j_k}] (s_1\dots s_k)^{p-2} = 0.
\eea
\end{proof}

\begin{lem}
\label{lem =01} The second sum in the right-hand side of \Ref{sum} equals 
$(-1)^k[I]^{p-1,\dots,p-1}_{j_1,\dots, j_k}(x)$.

\end{lem}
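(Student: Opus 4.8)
The plan is to recognize this second sum, component by component, as an instance of the integral formula already established in Theorem \ref{thm 2int}(ii), and equivalently to prove it directly from the elementary character-sum identity \Ref{sum=0}. Fixing the independent subset $\{j_1<\dots<j_k\}\subset J$, I set
\[
[P](t) \ = \ \frac{[d_{j_1,\dots,j_k}]}{{\prod}_{l=1}^k[f]_{j_l}(x,t)}\Big({\prod}_{j\in J}[f]_j(x,t)\Big)^{\frac{p-1}2},
\]
so that the second sum in \Ref{sum} is exactly $\sum_{t\in\FFF_p^k}[P](t)$. The first thing I would check is that $[P](t)$ really is a polynomial in $t$ over $\FFF_p$: each factor $[f]_{j_l}$ enters the master polynomial with exponent $\frac{p-1}2\ge 1$ (since $p>3$), so after dividing by $\prod_{l=1}^k[f]_{j_l}$ every exponent stays nonnegative. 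Its total degree is $n\frac{p-1}2-k$.

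Next I would expand $[P](t)$ into monomials and apply \Ref{sum=0} variable by variable, using
\[
{\sum}_{t\in\FFF_p^k} t_1^{m_1}\cdots t_k^{m_k} \ = \ \prod_{l=1}^k{\sum}_{t_l\in\FFF_p}t_l^{m_l},
\]
which equals $(-1)^k$ precisely when every $m_l$ is a positive multiple of $p-1$ and is $0$ otherwise. The decisive step is then a degree count: the smallest admissible monomial is $(t_1\cdots t_k)^{p-1}$, of degree $k(p-1)$, and any other admissible monomial has some $m_l\ge 2(p-1)$, hence total degree at least $(k+1)(p-1)$. Since the hypothesis \Ref{INeq} gives $\deg[P]=n\frac{p-1}2-k<(k+1)(p-1)$, no such larger admissible monomial can occur in $[P]$. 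Therefore only the $(t_1\cdots t_k)^{p-1}$ term survives the summation, and $\sum_{t\in\FFF_p^k}[P](t)$ equals $(-1)^k$ times its coefficient.

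It remains to identify that coefficient. Because the Taylor expansion in this section is taken at $t=0$ (so $q=0$ in the notation of Theorem \ref{thm m}), the coefficient of $(t_1\cdots t_k)^{p-1}$ in $[P](t)$ is by definition exactly $[I]^{p-1,\dots,p-1}_{j_1,\dots,j_k}(x)$; reduction modulo $p$ commutes with extracting Taylor coefficients, so this equals the projection of the corresponding integer coefficient, and the assertion follows. I do not anticipate a serious obstacle: the entire argument reduces to the degree bound \Ref{INeq} forcing $(t_1\cdots t_k)^{p-1}$ to be the unique admissible monomial, precisely the mechanism already behind Theorem \ref{thm 2int}(ii). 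The only points demanding care are the polynomiality of $[P]$ and checking that the degree estimate is sharp enough to genuinely exclude the next admissible monomial, of degree $(k+1)(p-1)$.
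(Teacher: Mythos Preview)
Your proposal is correct and is essentially the paper's own argument, just unpacked: the paper simply observes that inequality \Ref{ineq} specializes to \Ref{INeq} and then invokes Theorem \ref{thm 2int}, while you reproduce inline the degree-count mechanism behind that theorem. The content is the same.
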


\begin{proof}
The inequality \Ref{ineq} takes the form \Ref{INeq}.
 Now the lemma follows from Theorem \ref{thm 2int}.
\end{proof}
Theorem \Ref{thm pts p} is proved.
\end{proof}

\begin{rem}
Inequality \Ref{INeq} holds if $n\leq 2k+2$ independently of $p$.

\end{rem}

\subsection{Counting on $\ka$-folded covers}
\label{cpw3}
Let $\ka>2$ be a positive integer. 
As in Sections \ref{AnaZ}  and \ref{cpww} consider $n$ nonzero linear functions on $\C^k$,
$g_j = b_j^1t_1+ \dots+b_j^kt_k$, $j\in J$,
 with integer coefficients $b_j^i\in \Z$.
Let a prime integer $p$ be good  with respect to $(g_j)_{j\in J}$ and $\ka\big|(p-1)$.

Assume that all weights $(a_j)_{j\in J}$ are equal to $-(\ka-1)$. 
Under these assumptions consider
the algebraic equations \Ref{sieqp} and  differential equations \Ref{dif eqn p}. To construct
a solution of these equations choose a master polynomial
$\Phi(z,t) = \prod_{j\in J} f_j(z,t)^{(\ka-1)\frac{p-1}\ka}$,
consider the Taylor expansion at $t=0$ of the polynomial $F(z,t)$ in \Ref{pol F} and obtain the solution
$[I]^{p-1,\dots,p-1}(z)$
 of the algebraic equations \Ref{sieqp} and differential equations \Ref{dif eqn p}
 by taking the coefficient
of $(t_1\dots t_k)^{p-1}$ of the Taylor expansion, see Theorem \ref{thm m} and formula \Ref{[i]}.

Let $x=(x_1,\dots,x_n)\in\FFF_p^n$. Let 
$\Ga(x)$ be
 the affine hypersurface
\bean
\label{ec}
y^\ka = {\prod}_{j\in J} [f]_j(x,t)
\eean
over $\FFF_p$, cf. Section \ref{cpww}.

\begin{thm}
\label{thm pts p3} Let a prime integer $p$ be good with respect to $(g_j)_{j\in J}$. Let $\ka\big|(p-1)$ 
and $\ka\ne p-1$.
Let
\bean
\label{Ineq}
n(\ka-1)\frac{p-1}\ka -k < (k+1)(p-1),
\qquad
n(\ka-2)\frac{p-2}\ka -k < k(p-1).
\eean
 Then for any independent $\{ j_1<\dots<j_k\}\subset J$ we have
\bean
\label{pts=sol2} 
\int_{\Ga(x)} \frac{[d_{j_1,\dots,j_k}]}
{\prod_{l=1}^k[f]_{j_l}(x,t)}
& = &  (-1)^k [I]^{p-1,\dots,p-1}_{j_1,\dots, j_k}(x).
\eean
\end{thm}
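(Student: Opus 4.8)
The plan is to mimic the point-counting argument behind Theorem~\ref{thm pts p}, replacing the two-valued cover by the $\ka$-valued one. First I would fix an independent subset $\{j_1<\dots<j_k\}$ and note that the integrand $h=[d_{j_1,\dots,j_k}]/\prod_{l=1}^k[f]_{j_l}(x,t)$ depends only on $t$ and is defined exactly where $\prod_{l=1}^k[f]_{j_l}(x,t)\neq0$. Since a point of $\Ga(x)$ lying over a given $t$ is a choice of $y$ with $y^\ka=\prod_{j\in J}[f]_j(x,t)$, summing $h$ over $\Ga(x)$ amounts to weighting $h(t)$ by the number of $\ka$-th roots of $c=\prod_{j\in J}[f]_j(x,t)$. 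The hypothesis $\ka\mid(p-1)$ yields the clean character-sum count
\be
\#\{y\in\FFF_p : y^\ka=c\}={\sum}_{r=0}^{\ka-1}c^{\,r(p-1)/\ka},
\ee
valid for all $c\in\FFF_p$ (with the convention $0^0=1$), so that $\int_{\Ga(x)}h={\sum}_{r=0}^{\ka-1}T_r$ with
\be
T_r=[d_{j_1,\dots,j_k}]\,{\sum}_{t:\,\prod_l[f]_{j_l}(x,t)\neq0}\ \frac{\big(\prod_{j\in J}[f]_j(x,t)\big)^{r(p-1)/\ka}}{\prod_{l=1}^k[f]_{j_l}(x,t)}.
\ee

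Next I would analyse the three kinds of terms. The term $r=0$ equals $[d_{j_1,\dots,j_k}]\sum_{t}1/\prod_l[f]_{j_l}$, which vanishes by exactly the computation of Lemma~\ref{lem =0} (pass to the coordinates $s_l=[f]_{j_l}(x,t)$ and use $\sum_{s\in\FFF_p}s^{p-2}=0$). The top term $r=\ka-1$ has numerator $(\prod_j[f]_j)^{(\ka-1)(p-1)/\ka}$, which is the reduction of the master polynomial $\Phi(x,t)$ with $A_j=(\ka-1)(p-1)/\ka$; thus $T_{\ka-1}=\sum_{t}X_{j_1,\dots,j_k}(x,t,A)\bmod p$, and the first inequality in \Ref{Ineq} is precisely the hypothesis of Theorem~\ref{thm 2int}(ii), which identifies this sum with $(-1)^k[I]^{p-1,\dots,p-1}_{j_1,\dots,j_k}(x)$. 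Here the assumption $\ka\neq p-1$ enters: it guarantees $(p-1)/\ka\geq2$, so that for every $r\geq1$ each factor $[f]_{j_l}$ still appears in the numerator to a power $\geq1$ after cancelling $\prod_l[f]_{j_l}$. Consequently the summand of $T_r$ is a genuine polynomial $P_r$ (a product of linear forms) vanishing on $\{\prod_l[f]_{j_l}=0\}$, so the constraint may be dropped and $T_r=\sum_{t\in\FFF_p^k}P_r(t)$.

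The main obstacle is showing that every intermediate term $T_r$, $1\leq r\leq\ka-2$, vanishes. By \Ref{sum=0}, $\sum_{t\in\FFF_p^k}P_r(t)$ detects exactly the monomials $t_1^{m_1}\cdots t_k^{m_k}$ of $P_r$ in which each $m_l$ is a positive multiple of $p-1$, so it suffices to rule out any such surviving monomial. I would do this by returning to the coordinates $s_l=[f]_{j_l}$, writing $P_r=(s_1\cdots s_k)^{r(p-1)/\ka-1}\prod_{j\notin\{j_l\}}L_j(s)^{r(p-1)/\ka}$ with $L_j$ affine, and estimating variable by variable how much degree the factor $\prod_{j\notin\{j_l\}}L_j^{r(p-1)/\ka}$ can feed into each $s_l$; the second inequality in \Ref{Ineq} is the bound that makes a surviving monomial impossible for all such $r$, the case $r=\ka-2$ being the binding one. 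Collecting the three analyses gives $\int_{\Ga(x)}h=T_{\ka-1}=(-1)^k[I]^{p-1,\dots,p-1}_{j_1,\dots,j_k}(x)$, which is \Ref{pts=sol2}. I expect the degree bookkeeping in this final step --- in particular checking that the stated second inequality is sharp enough to kill every intermediate term --- to be the only delicate point; the rest is a direct transcription of the $\ka=2$ argument of Theorem~\ref{thm pts p}.
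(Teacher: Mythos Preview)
Your proposal is correct and is essentially the same argument as the paper's proof: decompose the sum over $\Ga(x)$ via the character count $\#\{y:y^\ka=c\}=\sum_{r=0}^{\ka-1}c^{r(p-1)/\ka}$, kill the $r=0$ term with Lemma~\ref{lem =0}, identify the $r=\ka-1$ term with $(-1)^k[I]^{p-1,\dots,p-1}_{j_1,\dots,j_k}(x)$ via Theorem~\ref{thm 2int}(ii) and the first inequality in \Ref{Ineq}, and show the intermediate terms vanish by a degree bound coming from the second inequality. The only simplification the paper makes over your outline is in that last step: rather than a variable-by-variable estimate in the $s_l$ coordinates, it just observes that the total $t$-degree of $P_r$ is $nr(p-1)/\ka-k$, so once this is below $k(p-1)$ for $r\leq\ka-2$ no monomial with every exponent a positive multiple of $p-1$ can occur --- this is exactly what the second inequality in \Ref{Ineq} (with $r=\ka-2$ the binding case) records, so the ``delicate bookkeeping'' you anticipate is a one-line total-degree count.
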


This theorem is a generalization of \cite[Example 1.7]{SV2} and Theorem \ref{thm pts p}.

\begin{proof}
It is easy to see that 
\bean
\label{sum3}
\int_{\Ga(x)}\frac{[d_{j_1,\dots,j_k}]}
{\prod_{l=1}^k[f]_{j_l}(x,t)}
& =& 
{\sum}_{t\in\FFF_p^k, \atop
\prod_{l=1}^k[f]_{j_l}(x,t)\ne 0} 
\frac{[d_{j_1,\dots, j_k}]}{\prod_{l=1}^k[f]_{j_l}(x,t)}
\\
\notag
&+&
{\sum}_{\ell=1}^{\ka-1}{\sum}_{t\in\FFF_p^k} \frac{[d_{j_1,\dots,j_k}]}{\prod_{l=1}^k[f]_{j_l}(x,t)}
\Big({\prod}_{j\in J} [f]_j(x,t)\Big)^{\ell\frac{p-1}\ka}.
\eean
The first sum in the right-hand side equals zero by Lemma \ref{lem =0}.

Consider the Taylor expansion of the polynomial 
$\frac{[d_{j_1,\dots,j_k}]}{\prod_{l=1}^k[f]_{j_l}(x,t)}
\Big({\prod}_{j\in J} [f]_j(x,t)\Big)^{\ell\frac{p-1}\ka}$. Consider the monomials
of the form $t_1^{l_1(p-1)}\dots t_k^{l_k(p-1)}$, where $l_1,\dots,l_k$ are positive integers.
If $\ell \leq \ka-2$, the second inequality in \Ref{Ineq} implies
that the coefficients of such
monomials in the Taylor expansion are all equal to zero and hence
the sum 
$ \sum_{t\in\FFF_p^k} \frac{[d_{j_1,\dots,j_k}]}{\prod_{l=1}^k[f]_{j_l}(x,t)}
\Big({\prod}_{j\in J} [f]_j(x,t)\Big)^{\ell\frac{p-1}\ka}$ equals zero for $\ell\leq \ka-2$.
If $\ell=\ka-1$, the first inequality in \Ref{Ineq} implies that 
among the monomials of the form $t_1^{l_1(p-1)}\dots t_k^{l_k(p-1)}$
only
 $t_1^{p-1}\dots t_k^{p-1}$ may appear with a
nonzero coefficient in the Tylor expansion. Hence 
\bea
{\sum}_{t\in\FFF_p^k} \frac{[d_{j_1,\dots,j_k}]}{\prod_{l=1}^k[f]_{j_l}(x,t)}
\Big({\prod}_{j\in J} [f]_j(x,t)\Big)^{(\ka-1)\frac{p-1}\ka}= (-1)^k [I]^{p-1,\dots,p-1}_{j_1,\dots,j_k}(x)
\eea
by Theorem \ref{thm 2int}.
The theorem is proved.
\end{proof}

\begin{rem} Inequalities \Ref{Ineq} are implied by the system of  inequalities
\bean
\label{Ineq1}
\frac\ka{\ka-1}(k+1)\geq n,
\qquad
\frac\ka{\ka-2}k\geq n
\eean
independent of $p$.
If $k\geq\ka-2$ , then the inequality $\frac\ka{\ka-1}(k+1)\geq n$ implies 
inequality $\frac\ka{\ka-2}k\geq n$
and is enough for Theorem \ref{thm pts p3} to hold.
In particular, if $k\geq \ka-2$, then $n=k+2$ is admissible.
\end{rem}

\section{Bethe ansatz}
\label{sec BA}

The goal of the Bethe ansatz is to construct mutual eigenvectors of the geometric Hamiltonians
$([K]_i(x))_{i\in J}$ defined in \Ref{Kjp}.

\smallskip
As in Sections \ref{AnaZ}, \ref{cpww} and \ref{cpw3} consider $n$ nonzero linear functions on $\C^k$,
$g_j = b_j^1t_1+ \dots+b_j^kt_k$, $j\in J$,
 with integer coefficients $b_j^i\in \Z$.
Let a prime integer $p$ be good  with respect to $(g_j)_{j\in J}$.
Let $a=(a_j)_{j\in J}$ be nonzero integer weights
$a_j\in\Z$, $a_j\ne 0$. 

Recall the functions $[f]_C(z) =\sum_{i\in J} [\la_C^i]z_i$ with $C\in \frak C_{\FFF_p}$.
Assume that $x=(x_1,\dots,x_n)\in \FFF_p^n$ is such that 
$f_C(x)\ne 0$ for any $C\in \frak C_{\FFF_p}$. Then
$([K_i](x))_{i\in J}$ are well-defined linear operators on $V_{\FFF_p}$.

Introduce the system of the {\it Bethe ansatz equations}
\bean
\label{BAE}
{\sum}_{j\in J} b^i_j\frac{[a_j]}{[f]_j(x,t)}= 0,\qquad i=1,\dots,k,
\eean
with respect to the unknown $t=(t_1,\dots, t_k)\in \FFF_p^k$.

\begin{thm} 
\label{thm BAp}
If $t^0\in \FFF_p^{k}$ is a solution of equations
\Ref{BAE}, then
the vector
\bean
\label{pol FB}
F(x,t^0) = {\sum}_{{\rm independent } \atop \{j_1 < \dots < j_k\} \subset J }
\frac{[d_{j_1,\dots,j_k}]}
{[f]_{j_1}(x,t^0)\dots [f]_{j_k}(x,t^0)}\, F(H_{j_1},\dots,H_{j_k})
\eean
satisfies equations \Ref{sieqp} and  is an eigenvector of the geometric Hamiltonians:
\bean
\label{eig}
[K]_i(x) F(x,t^0) = \frac{[a_i]}{[f]_i(x,t^0)} F(x,t^0),
\qquad i=1,\dots, n.
\eean
\end{thm}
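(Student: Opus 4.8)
The plan is to verify the two assertions of the theorem separately: first that $F(x,t^0)$ satisfies the algebraic equations \Ref{sieqp} (so it lies in $\Sing V_{\FFF_p}$), and second that it is a simultaneous eigenvector of the $[K]_i(x)$ with the stated eigenvalues. Throughout I abbreviate $f_j := [f]_j(x,t^0)$; these are all nonzero in $\FFF_p$ (they must be, since the Bethe equations \Ref{BAE} involve $1/[f]_j(x,t)$), and I recall from \Ref{detd} that $[d_{j_1,\dots,j_k}]=[\det_{i,l}(b^i_{j_l})]$ is the determinant whose columns are the coefficient vectors of $g_{j_1},\dots,g_{j_k}$. I will also use the weight vector $F_{\mathrm{wt}}(z,t) = \sum_{\{j_1<\dots<j_k\}\ \mathrm{indep}} W_{j_1,\dots,j_k}(z,t)\,F(H_{j_1},\dots,H_{j_k})$, whose value at $(x,t^0)$ is exactly $F(x,t^0)$.

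For the algebraic equations \Ref{sieqp} the argument is purely linear-algebraic. Fixing $j_2,\dots,j_k$ and summing the components of $F(x,t^0)$ against the weights, multilinearity of the determinant in its first column gives
\[
{\sum}_{j\in J}[a_j]\,\frac{[d_{j,j_2,\dots,j_k}]}{f_j\,f_{j_2}\cdots f_{j_k}}
= \frac{1}{f_{j_2}\cdots f_{j_k}}\,
\det\Big({\sum}_{j\in J}\tfrac{[a_j]}{f_j}\,g_j,\ g_{j_2},\dots,g_{j_k}\Big).
\]
The Bethe equations \Ref{BAE} say precisely that the vector $\sum_{j}\tfrac{[a_j]}{f_j}g_j$ has all $k$ coordinates equal to zero, so the determinant vanishes and \Ref{sieqp} holds. (Terms with $j\in\{j_2,\dots,j_k\}$ or giving a dependent subset contribute a vanishing determinant automatically, consistently with the skew-symmetry \Ref{skew}.)

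For the eigenvector equations I would run the coefficient-extraction argument that underlies \Ref{der zi}, but now at the level of the mod-$p$ key identity \Ref{fHp} itself, with no master function. Both sides of \Ref{fHp} are $V_{\FFF_p}$-valued $(k+1)$-forms in $(z,t)$, so their coefficients of the basis form $dz_i\wedge dt_1\wedge\dots\wedge dt_k$ must agree as rational functions. On the right, each $[\omega]_C=d[f]_C/[f]_C$ involves only $dz$-differentials, so the only way to build $dz_i\wedge dt_1\wedge\dots\wedge dt_k$ is to wedge the $dz_i$-part $[\la_C^i]/[f]_C$ of $[\omega]_C$ with the $dt$-top part $W_{j_1,\dots,j_k}$ of the product; summing over $C$ reproduces $[K]_i(z)F_{\mathrm{wt}}(z,t)$. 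On the left, I decompose $\nu(a)=\sum_j[a_j][\omega]_j$ into its $dz$-part, whose $dz_i$-coefficient is $[a_i]/[f]_i$, and its $dt$-part $\sum_l d_l(z,t)\,dt_l$ with $d_l(z,t)=\sum_j[a_j][b^l_j]/[f]_j$. Bookkeeping of how a $1$-form wedges with $[\omega]_{j_1}\wedge\dots\wedge[\omega]_{j_k}$ to yield one $dz_i$ and all $k$ factors $dt$ then shows the coefficient of $dz_i\wedge dt_1\wedge\dots\wedge dt_k$ on the left equals
\[
\tfrac{[a_i]}{[f]_i}\,F_{\mathrm{wt}}(z,t)\ +\ R_i(z,t),
\]
where every term of the remainder $R_i$ carries a factor $d_l(z,t)$ for some $l$. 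Equating the two coefficients and evaluating at $(z,t)=(x,t^0)$ finishes the argument: the Bethe equations \Ref{BAE} are exactly the statements $d_l(x,t^0)=0$ for $l=1,\dots,k$, so $R_i(x,t^0)=0$, $F_{\mathrm{wt}}(x,t^0)=F(x,t^0)$, and one is left with $[K]_i(x)F(x,t^0)=\tfrac{[a_i]}{f_i}F(x,t^0)$ for every $i\in J$.

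The main obstacle is the bookkeeping in this last step: one must check that after removing the $dt$-top contribution, every remaining way of producing $dz_i\wedge dt_1\wedge\dots\wedge dt_k$ from $\nu(a)\wedge[\omega]_{j_1}\wedge\dots\wedge[\omega]_{j_k}$ necessarily feeds a $dt_l$ out of $\nu(a)$ rather than a $dz$, so that the whole remainder is proportional to the $d_l$ and vanishes at the Bethe root. Concretely, since the product is a $k$-form while the target is a $(k+1)$-form with exactly one $dz$, at most one $dz$-factor can originate from the product; tracking the two cases (the single $dz_i$ supplied by $\nu(a)$ versus supplied by the product) shows that the second case forces the $dt$-part of $\nu(a)$, which is where the Bethe condition enters. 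Finally, all denominators $f_j$ and $[f]_C(x)$ are invertible in $\FFF_p$ at $(x,t^0)$, so the evaluation is legitimate and no $0\cdot\infty$ ambiguity arises.
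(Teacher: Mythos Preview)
Your proof is correct and follows essentially the same route as the paper. The paper's own proof is extremely terse: for \Ref{sieqp} it points to \Ref{id1}--\Ref{id111} and notes that at a Bethe root the $dt$-part of $\nu(a)$ vanishes, and for \Ref{eig} it says only that the statement is a straightforward corollary of the key identity \Ref{fHp}, referring to \cite{V5} for details. Your argument makes both steps explicit --- the determinant computation for \Ref{sieqp} is exactly the $dt$-top coefficient extraction from \Ref{id1}, and your Case~A/Case~B bookkeeping for the $dz_i\wedge dt_1\wedge\dots\wedge dt_k$ coefficient of \Ref{fHp} is precisely the ``straightforward corollary'' the paper alludes to --- so the two proofs coincide in substance.
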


\begin{proof} 
Equations \Ref{sieqp} for $F(x,t^0)$ follow from equations 
\Ref{id1} and \Ref{id111} reduced modulo $p$. 
Since $t^0$ is a solution of \Ref{BAE} the left-hand side of \Ref{id111} equals zero.
Equations \Ref{eig} are straightforward corollaries of the key identity \Ref{fHp}, see
the proof of \cite[Theorems  2.1]{V5}.
\end{proof}

\begin{cor}
\label{cor orth}

 Let $t^0$ and $t^1$ be  distinct solutions of the Bethe ansatz equations
\Ref{BAE}, then  $[S]^{(a)}(F(x,t^0),F(x,t^1))=0$.
\end{cor}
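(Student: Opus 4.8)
The plan is to use the eigenvector property from Theorem \ref{thm BAp} together with the symmetry of the geometric Hamiltonians with respect to the contravariant form. Since $t^0$ and $t^1$ are both solutions of the Bethe ansatz equations \Ref{BAE}, Theorem \ref{thm BAp} tells me that $F(x,t^0)$ and $F(x,t^1)$ are eigenvectors of each $[K]_i(x)$ with eigenvalues $\frac{[a_i]}{[f]_i(x,t^0)}$ and $\frac{[a_i]}{[f]_i(x,t^1)}$ respectively. The strategy is the classical argument that eigenvectors of a symmetric operator with distinct eigenvalues are orthogonal, adapted to this family of commuting Hamiltonians.

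Concretely, I would compute $[S]^{(a)}([K]_i(x)F(x,t^0),F(x,t^1))$ in two ways. On one hand, applying \Ref{eig} to $F(x,t^0)$ gives
\[
[S]^{(a)}([K]_i(x)F(x,t^0),F(x,t^1)) = \frac{[a_i]}{[f]_i(x,t^0)}\,[S]^{(a)}(F(x,t^0),F(x,t^1)).
\]
On the other hand, using the symmetry of $[K]_i(x)$ stated in Section \ref{AnaZ}, namely $[S]^{(a)}([K]_i(x)u,v)=[S]^{(a)}(u,[K]_i(x)v)$, and then applying \Ref{eig} to $F(x,t^1)$, I get
\[
[S]^{(a)}([K]_i(x)F(x,t^0),F(x,t^1)) = \frac{[a_i]}{[f]_i(x,t^1)}\,[S]^{(a)}(F(x,t^0),F(x,t^1)).
\]
Subtracting these two expressions yields
\[
\left(\frac{[a_i]}{[f]_i(x,t^0)}-\frac{[a_i]}{[f]_i(x,t^1)}\right)[S]^{(a)}(F(x,t^0),F(x,t^1)) = 0
\]
for every $i\in J$.

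To finish, I must show that the scalar factor is nonzero for at least one index $i$, which forces $[S]^{(a)}(F(x,t^0),F(x,t^1))=0$. The main obstacle is precisely this separation-of-eigenvalues step: I need the eigenvalue tuples $\big(\frac{[a_i]}{[f]_i(x,t^0)}\big)_{i\in J}$ and $\big(\frac{[a_i]}{[f]_i(x,t^1)}\big)_{i\in J}$ to differ. Since all weights $[a_j]$ are nonzero, the factor vanishes for a given $i$ exactly when $[f]_i(x,t^0)=[f]_i(x,t^1)$, i.e. when $g_i(t^0)=g_i(t^1)$ as elements of $\FFF_p$. If this held for all $i\in J$, then $t^0$ and $t^1$ would agree against every linear function $g_i$; because the span of $(g_j)_{j\in J}$ is $k$-dimensional (the essentiality assumption of Section \ref{Ana}), this would force $t^0=t^1$, contradicting that the solutions are distinct. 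Hence there exists $i$ with $g_i(t^0)\ne g_i(t^1)$, the corresponding scalar factor is nonzero, and the orthogonality $[S]^{(a)}(F(x,t^0),F(x,t^1))=0$ follows.
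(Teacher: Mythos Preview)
Your proof is correct and follows exactly the paper's approach: use the symmetry of $[K]_i(x)$ with respect to $[S]^{(a)}$ together with the eigenvector property \Ref{eig} to force orthogonality once some eigenvalue differs. The paper simply asserts ``since $t^0\ne t^1$, there exists $i$ such that $[f]_i(x,t^0)\ne[f]_i(x,t^1)$'' and concludes; you have spelled out the spanning argument behind that assertion.
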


\begin{proof} Since $t^0\ne t^1$, there exists $i$ such that $[f]_i(x,t^0)\ne [f]_i(x,t^1)$. Hence
$[K]_i(x)$ has distinct eigenvalues on $F(x,t^0)$, $F(x,t^1)$, but $[K_i](x)$ is symmetric,
\bea
[S]^{(a)}([K]_i(x)F(x,t^0), F(x,t^1))=[S]^{(a)}(F(x,t^0), [K]_i(x)F(x,t^1)).
\eea
\end{proof}

\bigskip

\end{document}